\newtheorem{theorem}{Theorem}
\newtheorem{lemma}[theorem]{Lemma}
\newtheorem{proposition}[theorem]{Proposition}
\begin{document}
\title[Parity and Exotic Combinatorial Formulae]{Parity and Exotic Combinatorial  \\ Formulae for Finite-Type Invariants \\ of Virtual Knots}
\author{Micah Whitney Chrisman\and
Vassily Olegovich Manturov}

\begin{abstract} The present paper produces examples of Gauss diagram formulae for virtual knot invariants which have no analogue in the classical knot case. These combinatorial formulae contain additional information about how a subdiagram is embedded in a virtual knot diagram.  The additional information comes from the second author's recently discovered notion of parity. For a parity of flat virtual knots, the new combinatorial formulae are Kauffman finite-type invariants. However, many of the combinatorial formulae possess exotic properties.  It is shown that there exists an integer valued virtualization invariant combinatorial formula of order $n$ for every $n$ (i.e. it is stable under the map which changes the direction of one arrow but preserves the sign). Hence, it is not of Goussarov-Polyak-Viro finite-type.  Moreover, every homogeneous Polyak-Viro combinatorial formula admits a decomposition into an ``even" part and an ``odd'' part. For the Gaussian parity, neither part of the formula is of GPV finite-type when it is nonconstant on the set of classical knots. In addition, eleven new non-trivial combinatorial formulae of order 2 are presented which are not of GPV finite-type.
\newline
\newline
\emph{Keywords:} knot, virtual knot, parity, finite-type invariant, combinatorial formula
\end{abstract}
\maketitle
\section{Introduction}
Gauss diagram formulae for finite-type invariants of classical knots were first introduced by Polyak and Viro \cite{PV}. An important theorem of Goussarov \cite{GPV} states that every integer valued finite-type invariant of classical knots has a combinatorial formula. Those formulae for classical knot invariants which are also virtual knot invariants are entirely described by the Polyak algebra. However, not every finite-type invariant of virtual knots is given as an element of the Polyak algebra.

Indeed, there are two notions for finite-type invariants of virtual knots.  The one described by the Polyak algebra is called Goussarov-Polyak-Viro finite-type (GPV for short).  The other notion, due to Kauffman \cite{KaV}, is the natural generalization of Vassiliev invariant to the virtual case.  Every GPV finite-type invariant is of Kauffman finite-type.  On the other hand, not every Kauffman finite-type invariant is of GPV finite-type \cite{C2, C1, KaV}.  It follows that not every Kauffman finite-type invariant of virtual knots is representable by a combinatorial formula.

This observation leads to a natural question: Does there exist an extension of the Polyak algebra so that every integer valued Kauffman finite-type invariant of virtual knots is representable by a combinatorial formula? In the present paper, we construct a proper extension of the Polyak algebra and investigate its relation to the GPV and Kauffman finite-type invariants. The resulting extension contains non-trivial combinatorial formulae which are exotic in the sense that they are Gauss diagram formulae which are not of GPV finite-type. In particular, for every $n$, there is a combinatorial formula of order $n$ that is invariant under the virtualization move and hence, by \cite{C2}, is not of GPV finite-type.  

The method of extension is by the second named author's recent discovery of parity in knot theory \cite{INM}. A parity is an axiomatic way of labelling the arrows of a Gauss diagram that behaves nicely under the Reidemeister moves. The labels for a parity are elements of $\mathbb{Z}_2$, where each arrow is either formally \emph{even} or \emph{odd}.  Hence, the extension of the Polyak algebra given here is a formal labelling of arrows as even or odd, subject to some relations so as to make the resulting algebra invariant under the Reidemeister moves. The resulting quotient by these relations is denoted by $\mathscr{Q}_n$, where the natural number $n$ is the maximum number of arrows allowed in a diagram.

\begin{theorem} \label{thmone} Let $n \in \mathbb{N}$ and let $\mathscr{D}$ be the set of Gauss diagrams on $S^1$ (or $\mathbb{R}$).  For any parity $P$, there is a map $I_n[P]:\mathbb{Z}[\mathscr{D}] \to \mathscr{Q}_n$ such that if $v \in \text{Hom}_{\mathbb{Z}}(\mathscr{Q}_n,\mathbb{Q})$, then $v \circ I_n[P]$ is an invariant of virtual knots (resp., long virtual knots).  If $P$ is a parity of flat virtual knots (resp., long virtual knots), then $v \circ I_n[P]$ is a Kauffman finite-type invariant of order $\le n$.
\end{theorem}
\begin{proof} See Lemmas \ref{thmonelemm1} and \ref{thmonelemm2}.
\end{proof}

The exotic combinatorial formulae are constructed using several projections of $\mathscr{Q}_n$: the one which kills all diagrams with \emph{any} even arrows (denoted $\mathscr{T}_{n}$), the one which kills all diagrams with \emph{any} odd arrows (denoted $\mathscr{E}_n$), and the one which kills a diagram if it has fewer than $n$ arrows or \emph{all} even arrows (denoted $\mathscr{O}_{n}$). The main results may be summarized as follows.

\begin{theorem} \label{thm1} The group $\text{Hom}_{\mathbb{Z}}(\mathscr{T}_{n},\mathbb{Q})$ is a finitely generated free $\mathbb{Z}$-module of rank $\rho$ where $\rho$ is given by:
\[
\rho=\left\{\begin{array}{cl} n & \text{ for Gauss diagrams on } S^1 \\ n(n+3)/2 & \text{ for Gauss diagrams on } \mathbb{R}  
\end{array} \right. .
\]
If $P$ is the Gaussian parity, there is a generating set of combinatorial formulae which are of Kauffman finite-type but not of GPV finite-type. For any parity and $n \in \mathbb{N}$ there is a virtualization invariant combinatorial formula of order $n$ for every $n$ (and hence by \cite{C2}, not of GPV finite-type).
\end{theorem}  
\begin{proof} See Lemmas \ref{thm1lemm2}, \ref{thm1lemm3}, \ref{thm1lemm4} and \ref{thm1lemm1}. 
\end{proof}
Let $\mathscr{A}$ denote the set of Gauss diagrams where all arrows are drawn dashed and let $\mathscr{A}^{(1,0)}$ denote the dashed arrow diagrams where each arrow is labelled with either a $0$ or a $1$ arbitrarily. Recall from \cite{GPV} that there is a natural pairing $\left<\cdot,\cdot\right>:\mathbb{Z}[\mathscr{A}] \times \mathbb{Z}[\mathscr{A}] \to \mathbb{Z}$ defined on generators by $\left<D,E \right>=1$ if $D=E$ and $0$ otherwise. There is a similar pairing $\left<\left<\cdot,\cdot\right>\right>: \mathbb{Z}[\mathscr{A}^{(1,0)}]\times \mathbb{Z}[\mathscr{A}^{(1,0)}] \to \mathbb{Z}$ defined on generators by $\left<\left<D,E\right>\right>=1$ if $D=E$ as \emph{labelled} Gauss diagrams and $0$ otherwise. If $I:\mathbb{Z}[\mathscr{D}] \to \mathbb{Z}[\mathscr{A}]$ is the map which sums over all subdiagrams of a Gauss diagram and $F$ is GPV combinatorial formula, then $\left<F,I(\cdot)\right>$ is a long virtual knot invariant.  For such invariants, we have the following decomposition theorem.
\begin{theorem} \label{thm2} If $F$ is a homogeneous GPV formula of order $n$, then there is an $F^e \in \mathscr{E}_n$, called the even part of $F$, and $F^o \in \mathscr{O}_{n}$, called the odd part of $F$,  such that:
\[
\left<F,I(\cdot)\right>=\left<\left<F^e,I[P](\cdot) \right>\right>+\left<\left<F^o,I[P](\cdot) \right>\right>.
\]
For the Gaussian parity $P$, $F^e$ and $F^o$ are of Kauffman finite-type but not of GPV finite-type of order $\le m$ for any $m$, whenever $F$ is not constant on the set of classical knots.
\end{theorem}
\begin{proof} See Lemmas \ref{evenpol}, \ref{gpvpara}, \ref{evenodddecomp}, and \ref{thm2lemm1}.
\end{proof}
The proof of Theorem \ref{thm2} follows from a \emph{functorality} argument. However, many of the combinatorial formulae presented in the present paper have no analogue in the Polyak algebra.  The invariants from Theorem \ref{thm1} do not come from functorality.  In other words, they are not the even or odd part of any GPV combinatorial formula.  Moreover, it will be shown that there are Kauffman finite-type invariants of order two in $\mathscr{O}_2$ which cannot be written as a linear combination of the even and odd parts of any GPV formula.  

The organization of this paper is as follows.  In Section \ref{secbackground} we review the two notions of finite-type invariants, parity, and the methods which will be used in the proofs of the main results. In Section \ref{seccombpform}, we define the parity enhanced Poyak algebra and prove the various parts of Theorem \ref{thmone}.  In Section \ref{secoddarrow}, we prove the various parts of Theorem \ref{thm1}.  Section \ref{seceodecomp} contains a discussion of the Polyak algebra, functorality, and a proof of the parts of Theorem \ref{thm2}.  Finally, Section \ref{secappendix} contains a table which gives a generating set for combinatorial formulae in $\mathscr{O}_2$.  The formulae are linearly independent over $\mathbb{Z}$. 
\section{Background} \label{secbackground}
\subsection{Two Flavors of Finite-Type} In the present section, we review the two different notions of finite-type invariants for virtual knots. \subsubsection{Kauffman Finite-Type} In \cite{KaV}, Kauffman introduced the notion of graphical finite-type invariants.  This notion of finite-type invariant is the one which is most similar to the well-known diagrammatic formulation of finite-type invariants for classical knots.  In Kauffman's version, singular knots are replaced with \emph{4-valent graphs}. 

Let $K_{\bullet}:S^1 \to \mathbb{R}^2$ (or $\mathbb{R}^1 \to \mathbb{R}^2$) be a virtual knot \emph{diagram} (resp., long virtual knot diagram), where the transversal self-intersections are marked with one of three possible crossing types: over/under crossing, virtual crossing, or a graphical vertex.  In addition to planar isotopies, the Reidemeister moves, and virtual moves, one adds \emph{rigid vertex isotopy moves}\cite{KaV}. Any virtual knot invariant $v$ can be extended to an invariant of knotted 4-valent graphs by successive application of the following rule:
\[
v\left(\begin{array}{c} \scalebox{.18}{\psfig{figure=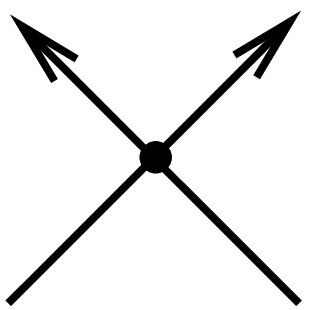}} \\ \end{array} \right):= v\left(\begin{array}{c} \scalebox{.18}{\psfig{figure=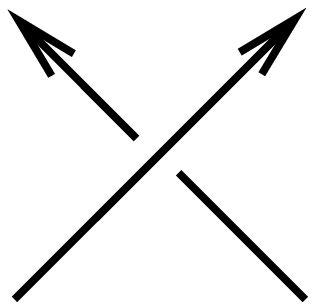}} \\ \end{array}\right)-v \left(\begin{array}{c} \scalebox{.18}{\psfig{figure=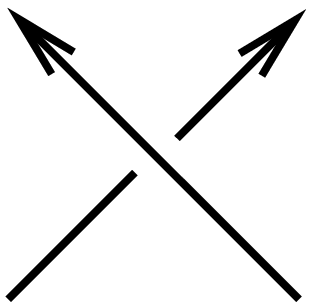}} \\\end{array}\right).
\]
A virtual knot invariant $v$ is said to be of Kauffman finite-type of order $\le n$ if $v(K_{\bullet})=0$ for all knotted 4-valent graphs $K_{\bullet}$ with more than $n$ graphical vertices.

Examples of Kauffman finite-type invariants arise in the same way as finite-type invariants for classical knots.  Let $f_K(A)$ denote the Kauffman $A$ polynomial of a virtual knot $K$.  The coefficient of $x^n$ in the power series expansion of $f_K(e^x)$ about $x=0$ is a Kauffman finite-type invariant (see \cite{KaV}). Other examples of Kauffman finite-type invariants can be found in \cite{Ma0}.

\subsubsection{Goussarov-Polyak-Viro Finite-Type} In addition to classical crossings, virtual crossings, and graphical vertices, knot diagrams may also have \emph{semi-virtual crossings}\cite{GPV}.  Like classical crossings, semi-virtual crossings may appear as ``over crossings'' or ``under crossings''.  Semi-virtual crossings are depicted as over or under classical crossings which are circumscribed by a small circle(see below).
\[
 \begin{array}{c} \scalebox{.2}{\psfig{figure=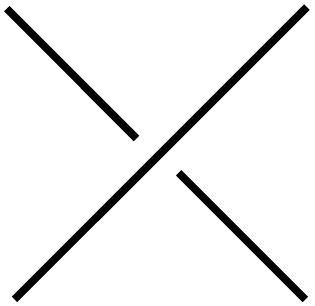}} \\ \text{Classical Crossing} \end{array} \hspace{.5cm} \begin{array}{c} \scalebox{.2}{\psfig{figure=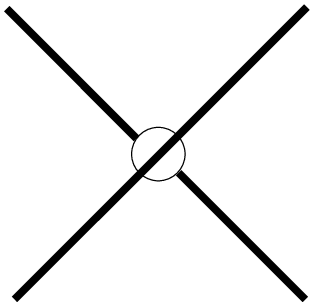}} \\ \text{Semi-virtual Crossing} \end{array} \hspace{.5cm} \begin{array}{c} \scalebox{.2}{\psfig{figure=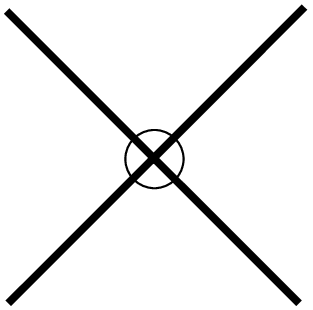}} \\\text{Virtual Crossing} \end{array} 
\]
Note that the definition of these crossing types depends on the orientation of the crossing.  The orientation of the strands has been omitted in the figure.

Virtual knot or long virtual knot invariants are extended to these semi-virtual diagrams by the following relation:
\[
v\left(\begin{array}{c} \scalebox{.18}{\psfig{figure=rightsemivirt.eps}} \\ \end{array}\right) := v\left(\begin{array}{c} \scalebox{.18}{\psfig{figure=rightcross.eps}} \\ \end{array}\right)-v\left(\begin{array}{c} \scalebox{.18}{\psfig{figure=virtcross.eps}} \\\end{array} \right).
\]
The extension to virtual knots with semi-virtual crossings is also denoted $v$.  A virtual knot invariant is said to be of \emph{Goussarov-Polyak-Viro finite-type} of order $\le n$ if $v(K_{\circ})=0$ for all semi-virtual knots $K_{\circ}$ with more than $n$ \emph{semi-virtual} crossings. For brevity, they are called GPV finite-type invariants. In \cite{GPV} it was shown that every GPV finite-type invariant is also of Kauffman finite-type.  However, the converse is not true (see Section \ref{virtmovesec}).

\subsubsection{Distinguishing between Kauffman and GPV finite-type invariants} \label{virtmovesec} There are two known methods for showing that a Kauffman finite-type invariant is not of GPV finite-type. The first is invariance under the virtualization move.  The second, more general method, is to employ twist lattices.  As both techniques are used in this paper, we discuss both of them at length.

The \emph{virtualization move} at a crossing in a knot diagram is as shown on the left hand side of Figure \ref{virtmv}. In \cite{KaV}, it was shown that the Jones polynomial is invariant under the virtualization move.  Hence, all of the the Kauffman finite-type invariants obtained from it also have this property.  More generally, Khovanov homology with arbitrary coefficients is also invariant under the virtualization move \cite{Kh}. However, in \cite{C2}, it was proved that there are no nonconstant integer valued GPV finite-type invariants that are invariant under the virtualization move.  Hence, to show that an integer valued Kauffman finite-type invariant is not of GPV finite-type, it is sufficient to prove that it is invariant under the virtualization move. For further discussion of the virtualization move, see \cite{FKM, Ma3}.
\begin{figure}[h]
\[
\begin{array}{c} \scalebox{.25}{\psfig{figure=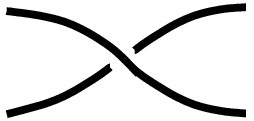}} \end{array} \leftrightharpoons \begin{array}{c} \scalebox{.25}{\psfig{figure=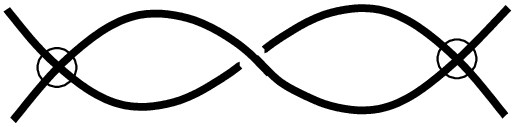}} \end{array},\,\,\,\,\begin{array}{c} \scalebox{.15}{\psfig{figure=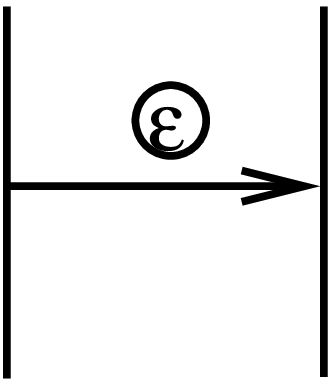}} \end{array} \leftrightharpoons \begin{array}{c} \scalebox{.15}{\psfig{figure=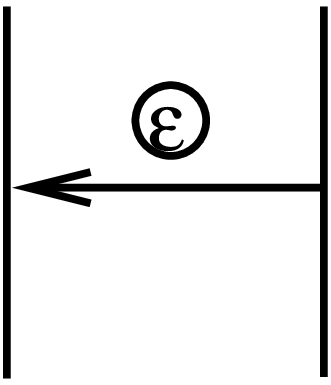}} \end{array}
\]
\caption{The Virtualization Move and Gauss Diagram Equivalent} \label{virtmv}

\end{figure}

Note that the virtualization move has a Gauss diagram description: it changes the direction of the arrow of the crossing without changing the sign (see right hand side of Figure \ref{virtmv}).

A more general method for distinguishing Kauffman and GPV finite-type invariants was given by the first named author in \cite{C1}. There, Eisermann's twist lattices \cite{E} were extended  to Kauffman and GPV finite-type invariants of virtual knots (also see \cite{E} for related references).  In the present paper, twist lattices will be used to show that some combinatorial formulae are of Kauffman finite-type but not GPV finite-type. 

Recall that $\mathscr{D}$ is the set of Gauss diagrams on $\mathbb{R}$ or $S^1$.  A \emph{twist sequence} is a function $\Phi:\mathbb{Z} \to \mathscr{D}$ such that the terms $\Phi(k)$ are identical outside a pair of disjoint intervals and inside the pair of intervals, the terms resemble one of the rows of the following diagram. 
\[
\begin{array}{|cccc|cccc|} \hline & \multicolumn{3}{c|}{\text{all arrows signed } \ominus} & \multicolumn{3}{c}{\text{all arrows signed } \oplus}& \\ \hline \ldots , & k=-2,  & k=-1, & k=0 & k=1, & k=2, & k=3,& \ldots \\ \hline \cdots & \scalebox{.12}{\psfig{figure=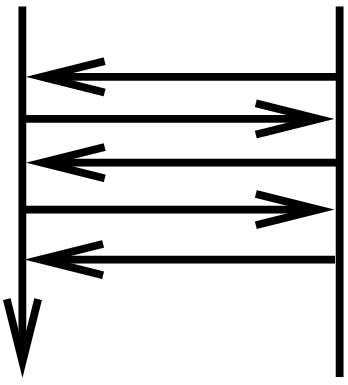}} &
\scalebox{.12}{\psfig{figure=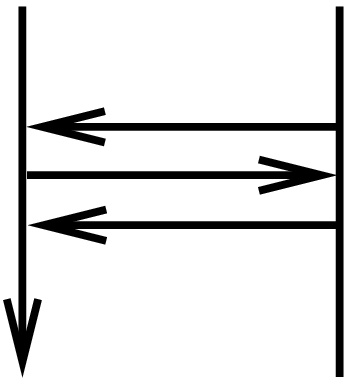}} &
\scalebox{.12}{\psfig{figure=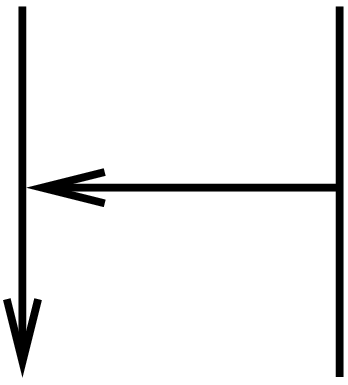}}&
\scalebox{.12}{\psfig{figure=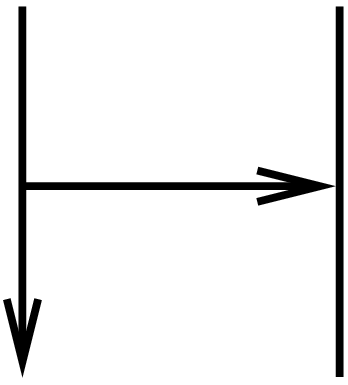}} &
\scalebox{.12}{\psfig{figure=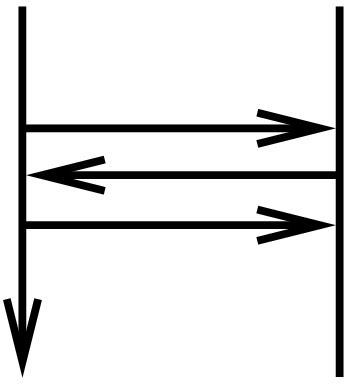}}&
\scalebox{.12}{\psfig{figure=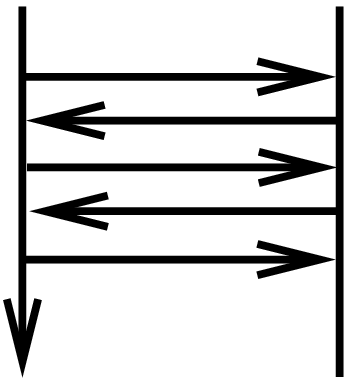}}& \cdots \\ \hline
\cdots & \scalebox{.12}{\psfig{figure=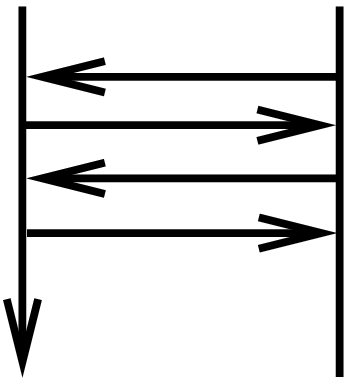}} &
\scalebox{.12}{\psfig{figure=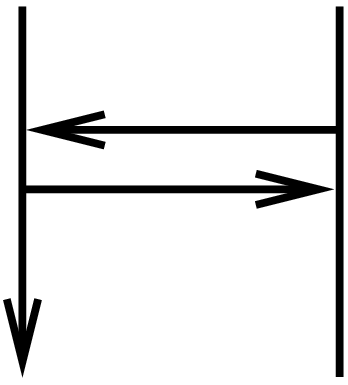}} &
\scalebox{.12}{\psfig{figure=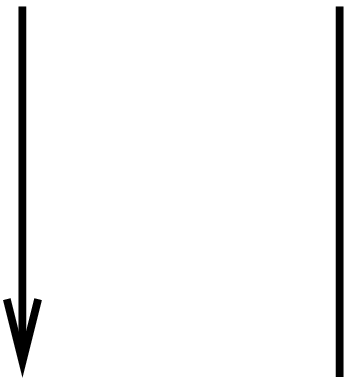}}&
\scalebox{.12}{\psfig{figure=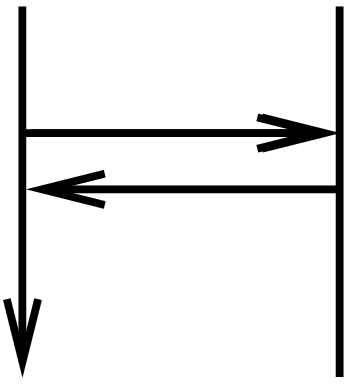}} &
\scalebox{.12}{\psfig{figure=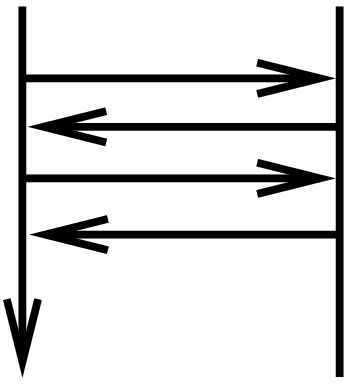}}&
\scalebox{.12}{\psfig{figure=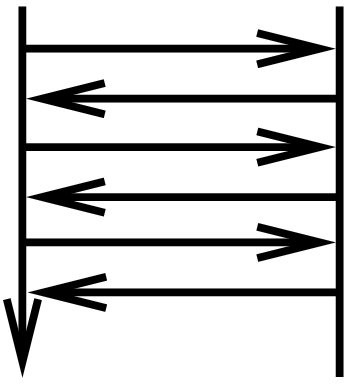}}& \cdots \\ \hline
\end{array}
\]
Note that the $k$-th term of the sequence in the first row has $|2k-1|$ arrows in the distinguished pair of arcs. Alternatively, we have the second row where every $k$-th term has $|2k|$ arrows in the distinguished pair of arcs.   

The \emph{type} of a twist sequence is given by the word $XYZ$. The character $X=E$ if there is an $E$ven number of arrows in each term and $X=O$ if there is an $O$dd number of arrows in each term. The character $Y=S$ if the ordering of the endpoints in the right vertical interval is the $S$ame as the ordering in the left vertical interval and $Y=B$ if the ordering on the right is $B$ackwards. The character $Z=L$ or $R$ according to whether the arrow(s) in the $k=1$ term point $L$eft or $R$ight. (Note: For twist sequences on $S^1$, the parameter $Z$ is dropped.) For more details on twist sequences and their types, the reader is referred to \cite{C1}.

A \emph{fractional twist sequence} is the same as twist sequence except that inside the pair of distinguished intervals, the terms are as in the following table.
\[
\begin{array}{|cccc|cccc|} \hline & \multicolumn{3}{c|}{\text{all arrows signed } \ominus} & \multicolumn{3}{c}{\text{all arrows signed } \oplus}& \\ \hline \ldots , & k=-2,  & k=-1, & k=0 & k=1, & k=2, & k=3,& \ldots \\ \hline
\cdots & \scalebox{.12}{\psfig{figure=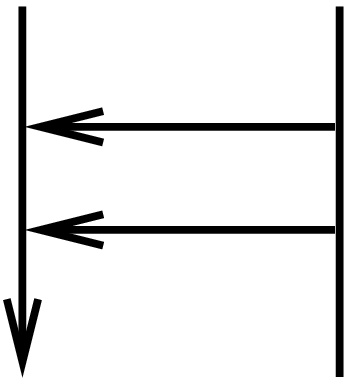}} &
\scalebox{.12}{\psfig{figure=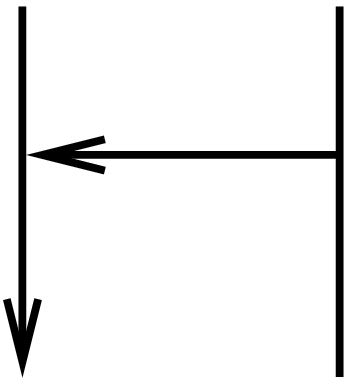}} &
\scalebox{.12}{\psfig{figure=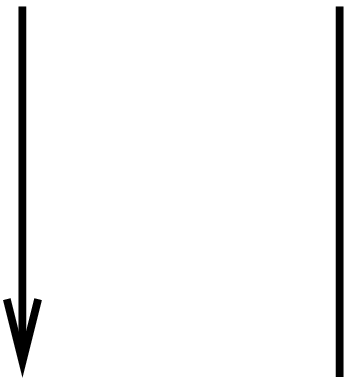}}&
\scalebox{.12}{\psfig{figure=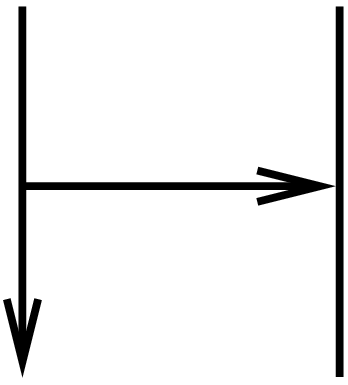}} &
\scalebox{.12}{\psfig{figure=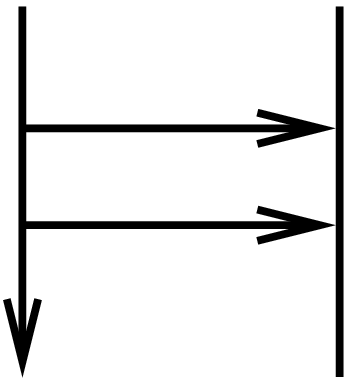}}&
\scalebox{.12}{\psfig{figure=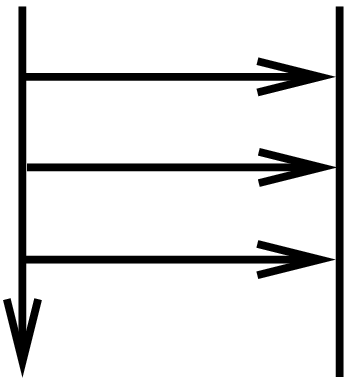}}& \cdots \\ \hline
\end{array}
\]
A fractional twist sequence is said to be of type $FYZ$ where $F$ stands for fractional and $Y$ and $Z$ are as above. Note that the distinguished arrows in a twist sequence of type $\text{FSZ}$ intersect one another.  On the other hand, the distinguished arrows in a twist sequence of type $\text{FBZ}$ do not intersect one another. 

A \emph{twist lattice} (\emph{fractional twist lattice}) is a function $\Phi:\mathbb{Z}^m \to \mathscr{D}$ such that each of the $m$ standard inclusions $\mathbb{Z} \to \mathbb{Z}^m \to \mathscr{D}$ is a twist sequence (resp. fractional twist sequence). The Gauss diagram $\Phi(\vec{0})$ is called the \emph{base} of the fractional twist lattice.  The following theorem is a generalization of a theorem of Eisermann \cite{E}.

\begin{theorem}[Chrisman \cite{C1}] \label{twist} A virtual knot or virtual long knot invariant $v:\mathbb{Z}[\mathscr{D}] \to \mathbb{Q}$ is of Kauffman finite-type (GPV finite-type) of order $\le n$ if and only if for every twist lattice (resp. fractional twist lattice) $\Phi:\mathbb{Z}^m \to \mathscr{D}$, the composition $v \circ \Phi:\mathbb{Z}^m \to \mathbb{Q}$ is a (discrete) polynomial of degree $\le n$.
\end{theorem}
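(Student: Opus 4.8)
The plan is to treat the Kauffman case and the GPV case in parallel, since they differ only by systematic substitutions: ``graphical vertex'' for ``semi-virtual crossing'', ``twist lattice'' for ``fractional twist lattice'', and the desingularization rule $v(\text{double point})=v(\oplus)-v(\ominus)$ for the semivirtualization rule $v(\text{semi-virtual})=v(\text{classical})-v(\text{virtual})$. The argument generalizes Eisermann's proof that the Jones polynomial behaves polynomially under twisting: the whole statement reduces to identifying an iterated finite-difference operator on $\mathbb{Z}^{m}$ with the operation of turning crossings located inside twist regions into graphical vertices (resp.\ semi-virtual crossings).

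\emph{Step 1 (the dictionary).} Fix a twist lattice $\Phi\colon\mathbb{Z}^{m}\to\mathscr{D}$ and set $f=v\circ\Phi$. Along the $i$-th coordinate, consecutive diagrams $\Phi(x)$ and $\Phi(x+e_{i})$ differ by inserting one (full) twist into the $i$-th band, so that after the $\Omega 2$ cancellation of a pair of opposite crossings and one application of the resolution rule, the first difference $(\Delta_{i}f)(x)$ equals, up to sign, the value of $v$ on the knotted $4$-valent graph obtained from $\Phi(x)$ by inserting a single graphical vertex into the $i$-th band. In the GPV case the analogous step is even more direct: along a fractional twist sequence, $\Phi(x)$ and $\Phi(x+e_{i})$ differ by converting one virtual crossing of the $i$-th band into a classical one, and $v(\text{classical})-v(\text{virtual})=v(\text{semi-virtual})$ immediately identifies $(\Delta_{i}f)(x)$ with $v$ of the diagram carrying one extra semi-virtual crossing. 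Because the $m$ twist bands are disjoint and these local modifications commute, iterating yields: for every multi-index $\alpha$ and every $x$, the iterated difference $(\Delta^{\alpha}f)(x)$ is $\pm v$ of a knotted $4$-valent graph (resp.\ a semi-virtual diagram) carrying exactly $|\alpha|$ graphical vertices (resp.\ semi-virtual crossings), distributed among the bands according to $\alpha$.

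\emph{Step 2 (the two implications).} If $v$ has order $\le n$, then $(\Delta^{\alpha}f)(x)\equiv 0$ for every $\alpha$ with $|\alpha|=n+1$, since each such difference computes $v$ on an object with more than $n$ graphical vertices (resp.\ semi-virtual crossings); by the multivariate Newton forward-difference expansion $f(x)=\sum_{\beta\,:\,|\beta|\le n}\binom{x}{\beta}(\Delta^{\beta}f)(0)$ this forces $f$ to agree with a polynomial of total degree $\le n$. Conversely, suppose $v\circ\Phi$ has degree $\le n$ for every twist (resp.\ fractional twist) lattice, and let $K_{\bullet}$ be a knotted $4$-valent graph (resp.\ semi-virtual diagram) with $m>n$ graphical vertices (resp.\ semi-virtual crossings). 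Open up each vertex into a twist band by a local modification of the diagram, producing a twist (resp.\ fractional twist) lattice $\Phi_{K_{\bullet}}\colon\mathbb{Z}^{m}\to\mathscr{D}$ each of whose coordinate slices is a genuine twist (resp.\ fractional twist) sequence. By Step 1 read backwards, $v(K_{\bullet})=\pm(\Delta_{1}\cdots\Delta_{m}f)(x_{0})$ for a suitable base point $x_{0}$, where $f=v\circ\Phi_{K_{\bullet}}$; since each $\Delta_{i}$ strictly lowers the total degree of a polynomial and $f$ has degree $\le n<m$, this $m$-fold difference vanishes, so $v(K_{\bullet})=0$. Hence $v$ has order $\le n$.

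The main obstacle is Step 1: pinning down the dictionary so that a single coordinate difference really is one desingularization. This requires fixing conventions for how a twist band looks at each index and checking that the requisite $\Omega 2$/Vr2 reductions behave as claimed (in particular, choosing the twist convention so that $v\circ\Phi$ is an honest polynomial on $\mathbb{Z}^{m}$, not merely polynomial on the even and odd sublattices), verifying that simultaneous insertions in distinct bands are independent, and confirming that the ``opening up'' construction in the converse yields a bona fide twist (resp.\ fractional twist) lattice in both the Wilson-loop and Wilson-line settings. Once this bookkeeping is in place, exactly as in Eisermann's treatment, the remaining steps are routine finite-difference calculus together with the well-definedness of the $4$-valent-graph and semi-virtual extensions of $v$.
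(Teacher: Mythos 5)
The paper does not prove Theorem \ref{twist}; it is imported verbatim from \cite{C1}, where it is established by exactly the finite-difference argument you describe (Eisermann's mechanism transplanted to graphical vertices and semi-virtual crossings: coordinate differences of $v\circ\Phi$ compute $v$ on objects with the corresponding number of desingularized crossings, and the converse resolves each such crossing into the $0$th and $1$st terms of a (fractional) twist sequence so that $v(K_{\bullet})$ becomes an $m$-fold mixed difference of a degree-$\le n$ polynomial with $m>n$). Your outline is correct and matches that proof, including the honest identification of where the real work lies — fixing the twist conventions so that one coordinate step is one desingularization uniformly over all of $\mathbb{Z}$ — so there is nothing within this paper itself to compare it against.
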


In \cite{C1}, a twist lattice argument was used to show that the Kauffman finite-type invariants obtained from the Kauffman $A$ polynomial (i.e. the polynomial invariant which arises from the generalization of the Kauffman bracket to virtual knots\cite{KaV}) are not of GPV finite-type. This fact was originally observed for small orders in \cite{KaV}.  

\subsection{Parities} \label{pardef}
Let $D$ be a Gauss diagram.  Let $A(D)$ denote the set of arrows of $D$. If $D \leftrightharpoons D'$ is a Reidemeister move, then there is a one-to-one correspondence between arrows not involved in the move. For $w \in A(D)$, we denote the corresponding unaffected arrow as $w' \in A(D')$. Let $\mathscr{D}^{(1,0)}$ denote the set of Gauss diagrams where each arrow is labelled with an element of $\mathbb{Z}_2=\{0,1\}$.  A \emph{parity} is a function $P:\mathscr{D} \to \mathscr{D}^{(1,0)}$ satisfying the following four properties.
\begin{enumerate}
\item If $D \in \mathscr{D}$ has an arrow $x$ with consecutive endpoints then $P$ assigns the label $0$ to $x$.
\item If $D \in \mathscr{D}$ and $x,y \in A(D)$ have opposite sign and are embedded as the two affected arrows in a Reidemeister 2 move, then $P$ assigns the same label to $x$ and $y$. 
\item Suppose that $D \rightleftharpoons D'$ is a Reidemeister 3 move. Let $\{x,y,z\}$  denote the set of arrows of $D$ which are changed by the move and $\{x',y',z'\}$ the set of corresponding arrows in $D'$. Then $P$ assigns the label $1$ to either zero or two elements of $\{x,y,z\}$. If $t \in \{x,y,z\}$, then $P$ assigns the same label to $t$ and $t'$ in $\{x',y',z'\}$.
\item If $D \rightleftharpoons D'$ is any Reidemeister move, and $(y,y')$ is a corresponding pair of unaffected arrows, then $P$ assigns the same label to $y$ and $y'$.      
\end{enumerate}

The standard example of a parity is the \emph{Gaussian parity}.  Let $D$ be a Gauss diagram. To $D$ we associate its \emph{intersection graph}. Two arrows $a$ and $b$ are said to {\em intersect} (or to be {\em linked}) if their endpoints alternate on $\mathbb{R}$ or $S^1$. We write $(a,b)=(b,a)=1$ if $a$ and $b$ intersect and $(a,b)=0$ otherwise. The intersection graph is the graph with a vertex for each arrow of the diagram and an edge between two vertices $a$ and $b$ exactly when $(a,b)=1$ (see Figure \ref{intgraphdef}). 

\begin{figure}[h]
\[
\begin{array}{cc}
\scalebox{.25}{\psfig{figure=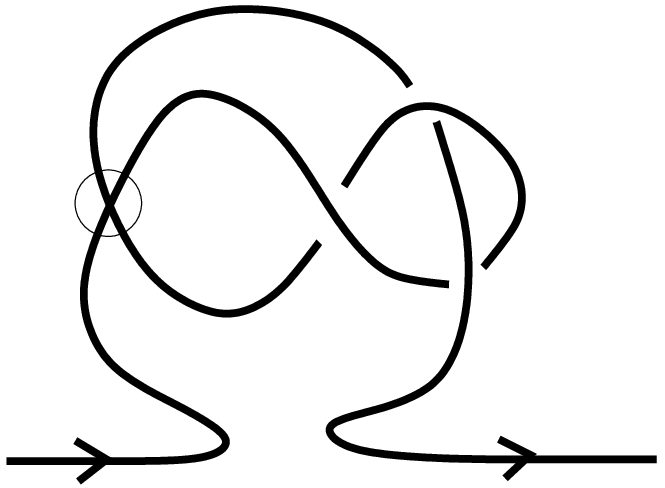}} & \hspace{1cm} \scalebox{.25}{\psfig{figure=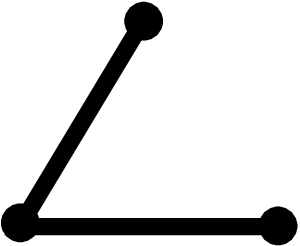}}
\end{array}
\]
\caption{A long virtual knot and its intersection graph.} \label{intgraphdef}
\end{figure}

Given a Gauss diagram $D$ and its intersection graph $G$, the Gaussian parity is defined as follows.  An arrow in $D$ is labelled $1$ if the degree of its vertex in $G$ is odd and a $0$ if the degree of its vertex in $G$ is even. It is easy to see that this definition satisfies the parity axioms. While quite simple in construction, the Gaussian parity has useful applications to realization and minimality problems in knot theory.  For example, the Gaussian parity was used in \cite{Ma01} to prove that there are nontrivial free knots.  This provided a first counterexample to a conjecture of Turaev (see also \cite{Gib}).

There are many examples of parities and their applications in knot theory. In \cite{Af}, parities were used to extend several knot polynomials.  In \cite{Ma7}, parities and justified parity were used to produce a sliceness obstruction for free knots. Additionally, there exists a parity of flat virtual knots which detects non-invertibility of free knots. More recently, the \emph{universal parity group} has been used to construct a natural map from knots in thickened surfaces to \emph{classical knots} \cite{Ma8}. Although parities are defined combinatorially, these observations imply that parities encode interesting topological and geometric properties of virtual knots.

\section{Theorem 1: Parity Enhanced Polyak Algebra}\label{seccombpform} \label{sec3}
\subsection{Construction of Parity Enhanced Polyak Algebra} The construction is the same for Gauss diagrams on $S^1$ and Gauss diagrams on $\mathbb{R}$. Consider $\mathscr{A}^{(1,0)}$, the collection of Gauss diagrams with \emph{dashed} arrows where each arrow has the label 1 or 0 (the empty Gauss diagram is by decree also in $\mathscr{A}^{(1,0)}$).  Note that the labels of this set are chosen arbitrarily.  Hence, a Gauss diagram with $n$ arrows has $2^n$ ways to attach labels to the arrows. 
\[
\underline{\text{Q1}}:\,\,\, \begin{array}{c} \scalebox{.35}{\psfig{figure=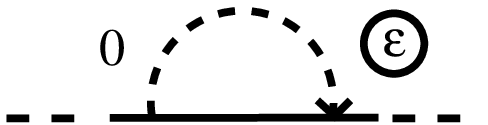}} \end{array}=0,\,\,\,\, \underline{\text{Q2}}:\,\,\, \begin{array}{c} \scalebox{.25}{\psfig{figure=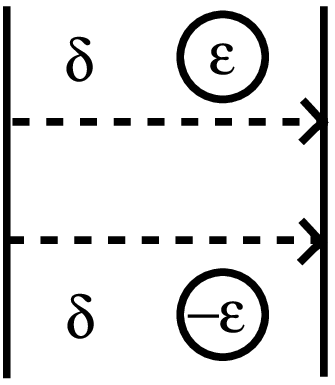}} \end{array}+\begin{array}{c} \scalebox{.25}{\psfig{figure=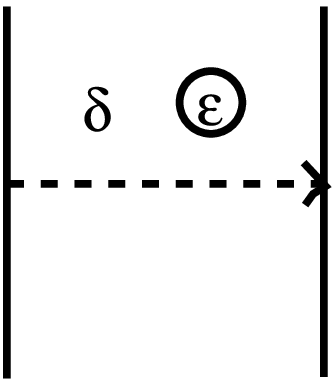}} \end{array}+\begin{array}{c} \scalebox{.25}{\psfig{figure=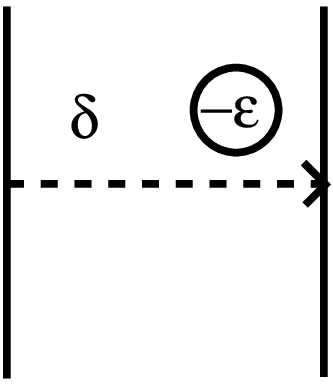}} \end{array}=0
\]
\begin{eqnarray*}
\underline{\text{Q3}}:\,\,\,  & & \begin{array}{c}\scalebox{.22}{\psfig{figure=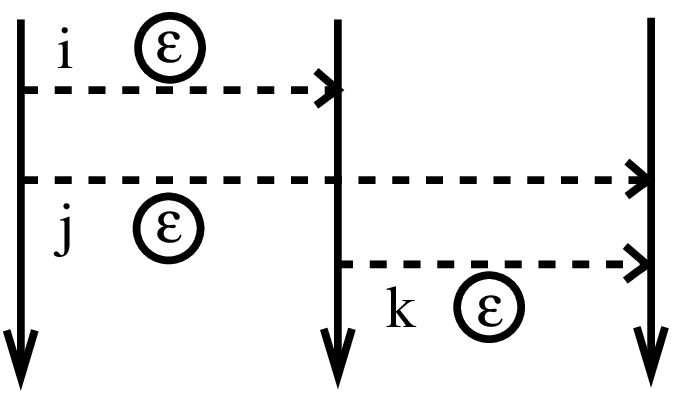}} \end{array}+\begin{array}{c}\scalebox{.22}{\psfig{figure=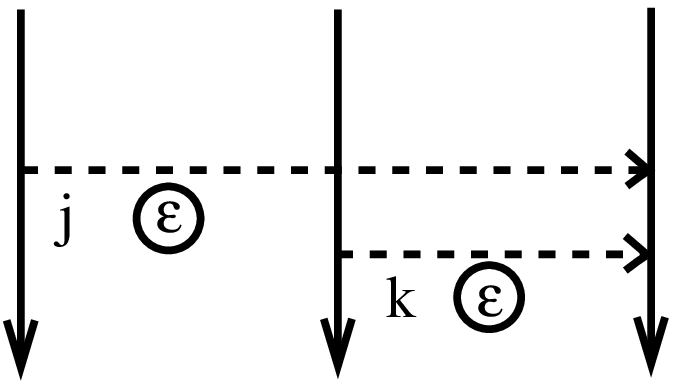}} \end{array}+\begin{array}{c}\scalebox{.22}{\psfig{figure=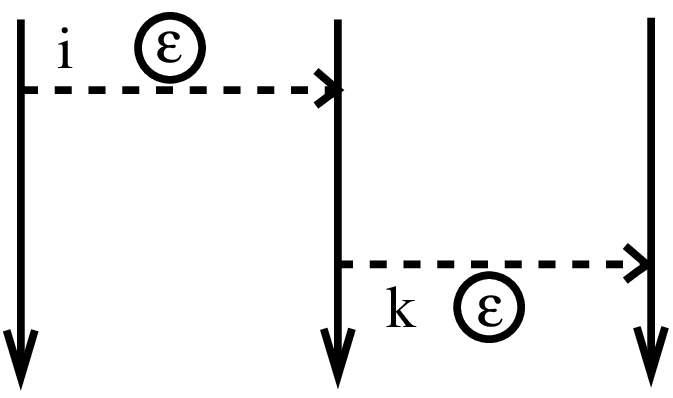}} \end{array}+\begin{array}{c}\scalebox{.22}{\psfig{figure=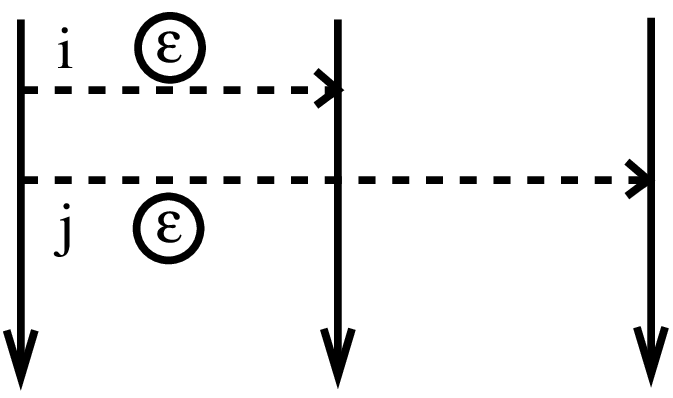}} \end{array} \\ & = & \begin{array}{c}\scalebox{.22}{\psfig{figure=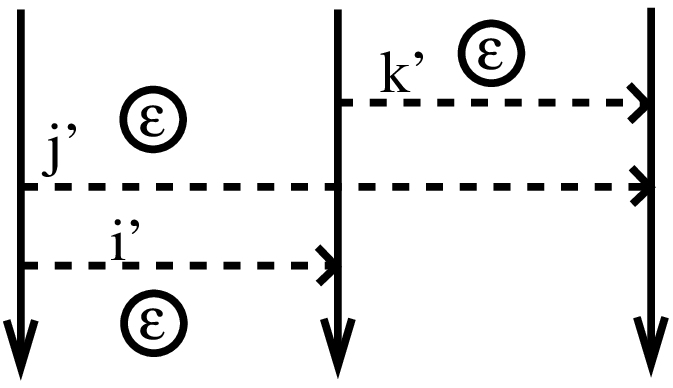}} \end{array}+\begin{array}{c}\scalebox{.22}{\psfig{figure=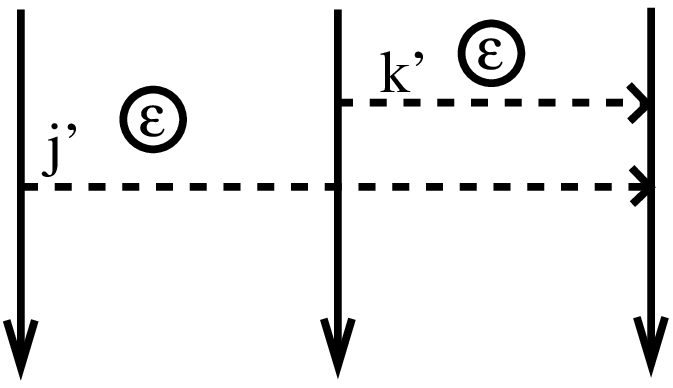}} \end{array}+\begin{array}{c}\scalebox{.22}{\psfig{figure=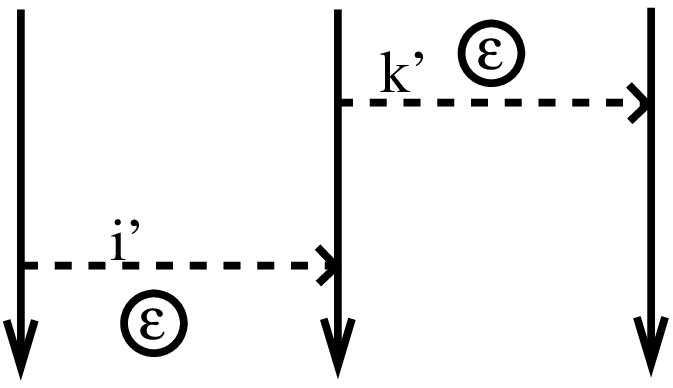}} \end{array}+\begin{array}{c}\scalebox{.22}{\psfig{figure=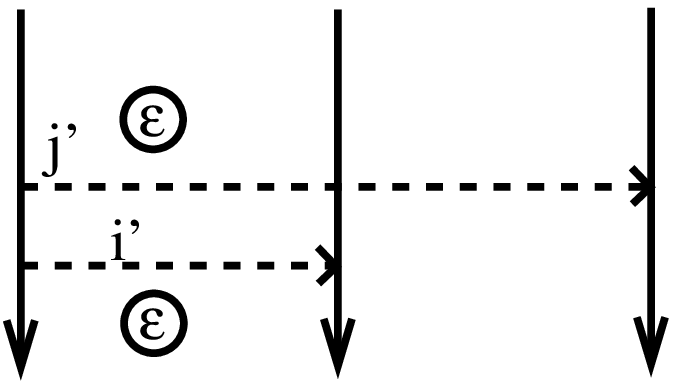}} \end{array} \\
\end{eqnarray*}
In Q3, we include all possibilities where $i=i'$, $j=j'$, $k=k'$, and $i+j+k\equiv i'+j'+k'\equiv 0 \pmod 2$. Note also that the $\delta$ in the Q2 relations refers to a label of 1 or 0.

Define $\Delta Q=\left< \text{Q1},\text{Q2},\text{Q3} \right>$.  Let $A_n \subset \mathscr{A}^{(1,0)}$ be the set of those diagrams having more than $n$ arrows. The \emph{parity enhanced Polyak algebra} is defined by the following quotients:
\[
\mathscr{Q}=\frac{\mathbb{Z}[\mathscr{A}^{(1,0)}]}{\Delta Q},\,\,\,\mathscr{Q}_n=\frac{\mathbb{Z}[\mathscr{A}^{(1,0)}]}{\left<\Delta Q,A_n\right>}.
\]

For Gauss diagrams on $\mathbb{R}$, the multiplication map for the algebra is defined in the same way as for the Polyak algebra: $\mu(D_1 \otimes D_2)$ is the concatenation $D_1 D_2$ of the diagrams.  The algebra structure of the parity enhanced Polyak algebra will not be used in the present paper.  Instead, we will refer to parity enhanced Polyak groups and use only its structure as a finitely generated abelian group.

Following \cite{GPV}, we define a map $i:\mathbb{Z}[\mathscr{D}^{(1,0)}] \to \mathbb{Z}[\mathscr{A}^{(1,0)}]$ on generators by $i(D)=D$ with all arrows drawn \emph{dashed} and all sign and $\mathbb{Z}_2$ markings unchanged.  Let $P$ be a parity.  Define $I[P]:\mathbb{Z}[\mathscr{D}] \to \mathbb{Z}[\mathscr{A}^{(1,0)}]$ on generators by:
\[
I[P](D)=\sum_{D' \subset P(D)} i(D').
\]
where the sum is taken over all subdiagrams $D'$ of $P(D)\in\mathscr{D}^{(1,0)}$, with the corresponding arrows in $D'$ labelled exactly as in $P(D)$. If $\pi_n:\mathscr{Q} \to \mathscr{Q}_n$ is the natural projection, we define $I_n[P]:\mathbb{Z}[\mathscr{D}] \to \mathscr{Q}_n$ by the composition $I_n[P]=\pi_n \circ I[P]$.

For any parity $P$, the linear functionals in $\text{Hom}_{\mathbb{Z}}(\mathscr{Q}_n, \mathbb{Q})$ give rise to virtual knot invariants. 

\begin{lemma}[proof of Theorem \ref{thmone}] \label{thmonelemm1} Let $P$ be any parity for Gauss diagrams on $S^1$ or $\mathbb{R}$. If $v \in \text{Hom}_{\mathbb{Z}}(\mathscr{Q}_n,\mathbb{Q})$, then $v \circ I_n[P]$ is a virtual knot invariant. 
\end{lemma}

\begin{proof} Given any Reidemeister relation, apply the map $I[P]$.  The distribution of zeros and ones in the $\Delta Q$ relations coincides with the parity definition in Section \ref{pardef}.  Collect terms in the image which are identical outside the affected arrows.  Each of the resulting groupings all lie in $\Delta Q$.  The fact that these all vanish in $\mathscr{Q}_n$ is sufficient to guarantee that $v \circ \pi_n \circ I[P]$ is an invariant.
\end{proof}

Combinatorial formulae my be defined in direct analogy to those in \cite{GPV}.  There is the pairing $\left<\left<\cdot, \cdot\right>\right>:\mathbb{Z}[\mathscr{A}^{(1,0)}] \times \mathbb{Z}[\mathscr{A}^{(1,0)}] \to \mathbb{Z}$ defined on generators by $\left<\left<D_1,D_2 \right>\right>=1$ if $D_1=D_2$ as labelled Gauss diagrams and $\left<D_1,D_2 \right>=0$ if $D_1 \ne D_2$.  A combinatorial formula of order $\le n$ is an element $F \in \mathbb{Z}[\mathscr{A}^{(1,0)}]$ such that all terms in $F$ have $\le n$ arrows and $\left<\left<F,r\right>\right>=0$ for all $r \in \Delta Q$.  For a parity $P$, a combinatorial formula $F$ defines a virtual knot or long virtual knot invariant $v_F$ by the rule:
\[
v_F(\cdot)=\left<\left<F,I[P](\cdot)\right>\right>.
\]
It follows from the definitions that $\left<\left<F,\cdot\right>\right> \in \text{Hom}_{\mathbb{Z}}(\mathscr{Q}_n,\mathbb{Q})$.  The collection of all such invariants for a parity $P$ are referred to as \emph{combinatorial $P$-formulae}.

\subsection{Example: Existence of Nontrivial Formulae} \label{nontrivsec} As an example of a combinatorial $P$-formula, consider the linear combination shown in Figure \ref{nontrivinv}. Note that only the terms with all $\oplus$ arrows are given. For each drawn term $D_{\oplus,\oplus}$, there are three additional terms $D_{\oplus, \ominus}$, $D_{\ominus, \oplus}$, $D_{\ominus, \ominus}$ which correspond to the three additional ways in which the arrows can be signed.  The coefficient of $D_{\epsilon_1,\epsilon_2}$ in the sum is the coefficient of $D_{\oplus,\oplus}$ times $\epsilon_1 \cdot \epsilon_2$ (this is the simplification convention of \cite{GPV}).
\begin{figure}[h]
\[
\begin{array}{c} \psfig{figure=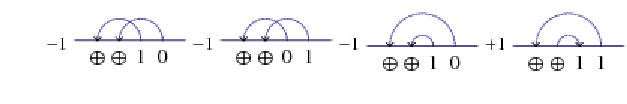} \end{array}
\]
\caption{A non-trivial formula $F_1$ of order $2$.} \label{nontrivinv}
\end{figure}

To show that this is a combinatorial $P$-formula, it is sufficient to check the relation $\text{Q1}$,$\text{Q2}$, and $\text{Q3}$. Since none of the diagrams has an arrow labelled 0 with consecutive endpoints, it follows that $\left<\left<F_1,\text{Q1} \right>\right>=0$.  

For a $\text{Q2}$ relation, note that all the affected arrows must have the same label.  As there is no pair of arrows having both the same direction and the same label and there are no diagrams having exactly one arrow, it follows that $\left<\left<F_1,\text{Q2} \right>\right>=0$ for all $\text{Q2}$ relations having two or fewer arrows. For $\text{Q2}$ relations having two or more arrows in each term, the simplification convention implies that $\left<\left<F_1,\text{Q2}\right>\right>=0$. 

Finally, the $\text{Q3}$ relations must be verified. Each term in a $\text{Q3}$ relation contains at least two arrows. However, the terms of $F_1$ have two arrows each.  Hence we must verify a number of \emph{six-term relations} i.e. the sum of terms of the $\text{Q3}$ relation which has only two arrows in every term. The three intervals of the $\text{Q3}$ relation can be embedded in six ways into $\mathbb{R}$.   For each of these cases, the labels of the arrows can be all zero or exactly one label can be zero.  If all the labels are zero, the relation is trivially satisfied.  Hence for each embedding of the intervals into $\mathbb{R}$, there are three cases to check. The case of one embedding is given in Figure \ref{relscheck}.  It is easily checked that $\left<\left<F_1,\text{LHS(Q3)}-\text{RHS(Q3)}\right>\right>=0$ for each of these cases.
\begin{figure}[h]
\begin{eqnarray*}
\scalebox{.15}{\psfig{figure=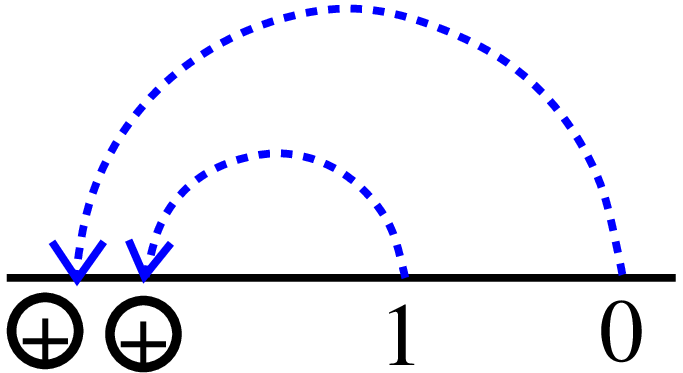}}\,\,+\,\,\scalebox{.15}{\psfig{figure=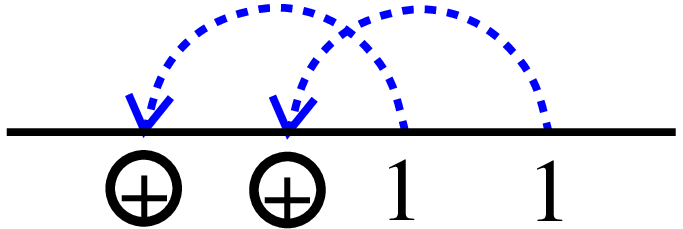}}\,\,+\,\,\scalebox{.15}{\psfig{figure=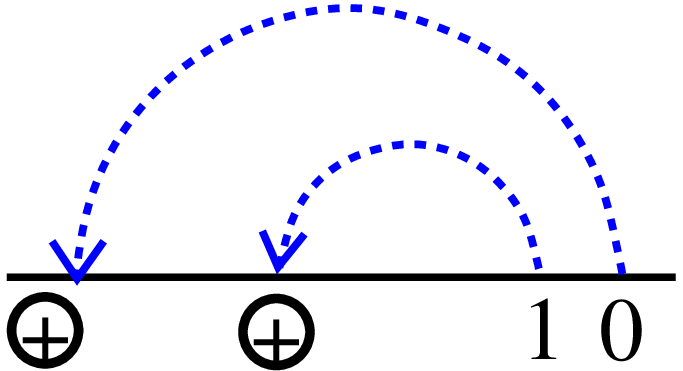}} &=& \scalebox{.15}{\psfig{figure=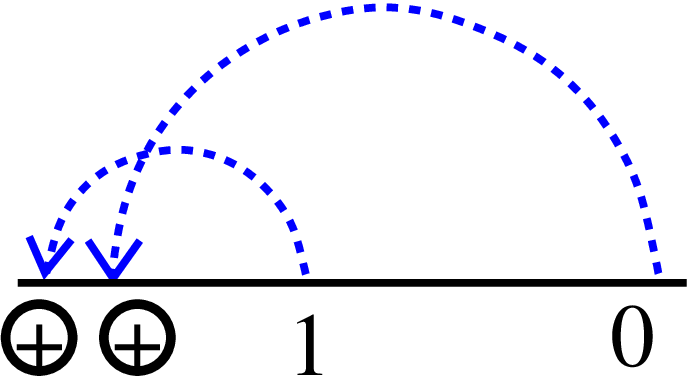}}\,\, + \,\, \scalebox{.15}{\psfig{figure=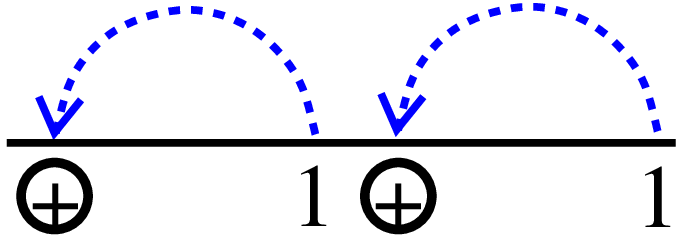}} \,\,+\,\,\scalebox{.15}{\psfig{figure=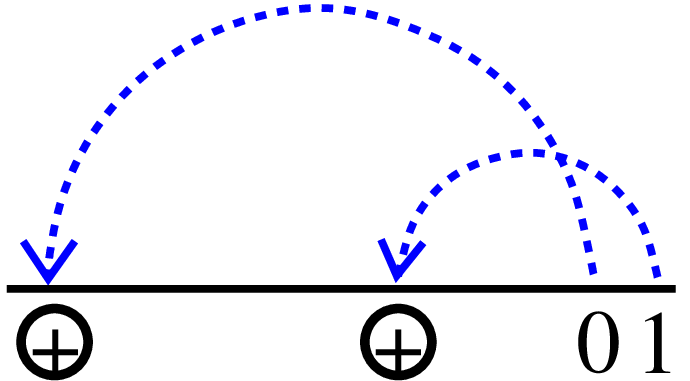}} \\
\scalebox{.15}{\psfig{figure=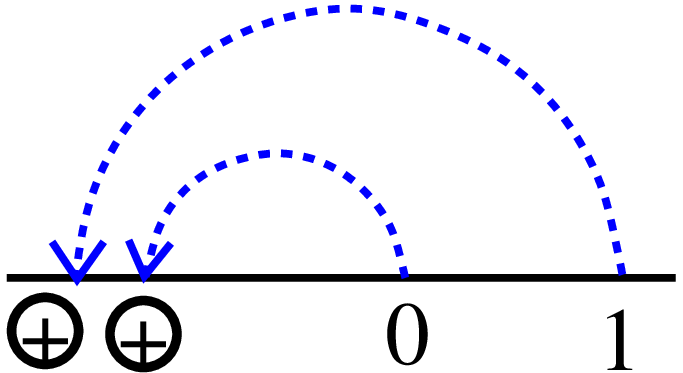}} \,\, + \,\, \scalebox{.15}{\psfig{figure=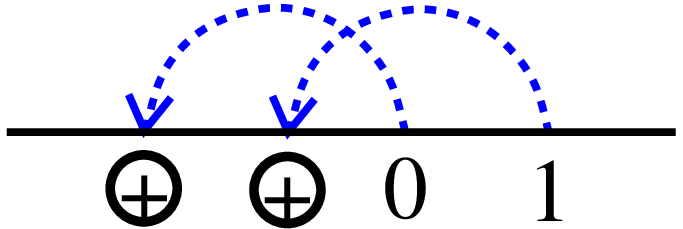}} \,\,+\,\,\scalebox{.15}{\psfig{figure=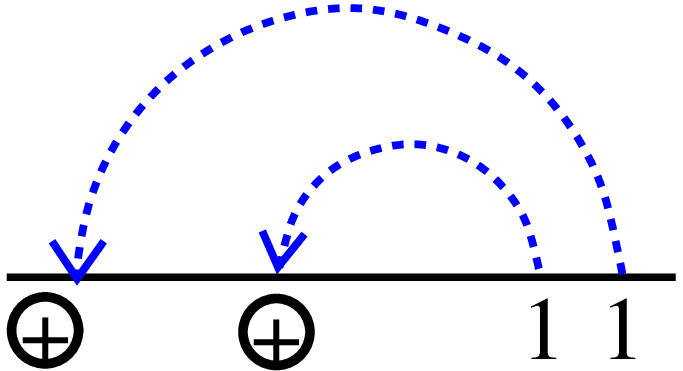}} &=& \scalebox{.15}{\psfig{figure=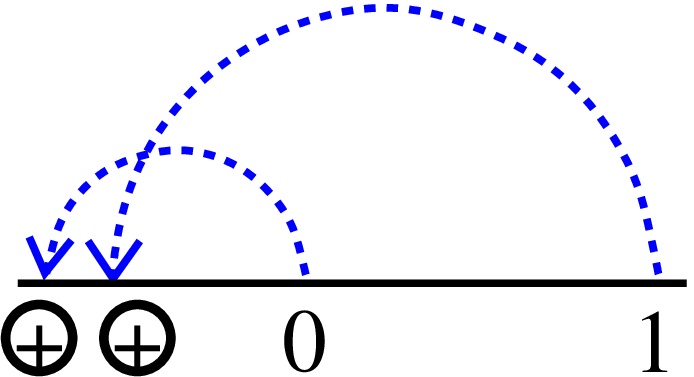}} \,\,+\,\,\scalebox{.15}{\psfig{figure=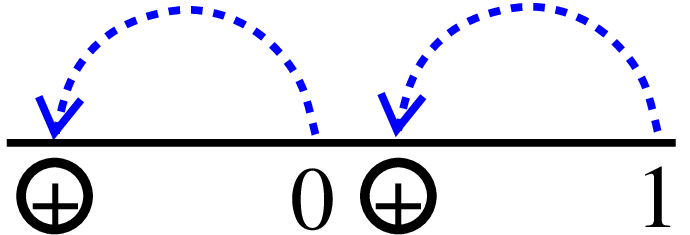}} \,\,+\,\, \scalebox{.15}{\psfig{figure=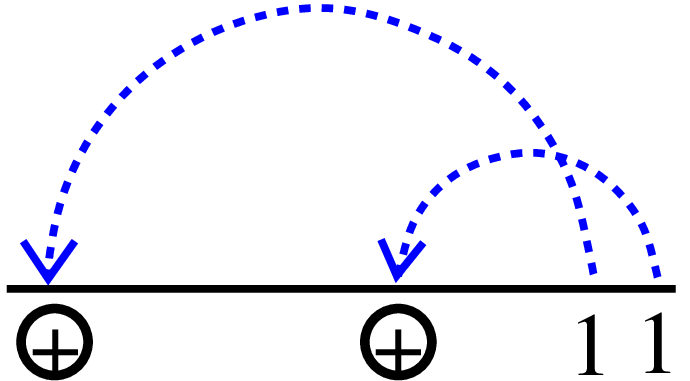}}\\
\scalebox{.15}{\psfig{figure=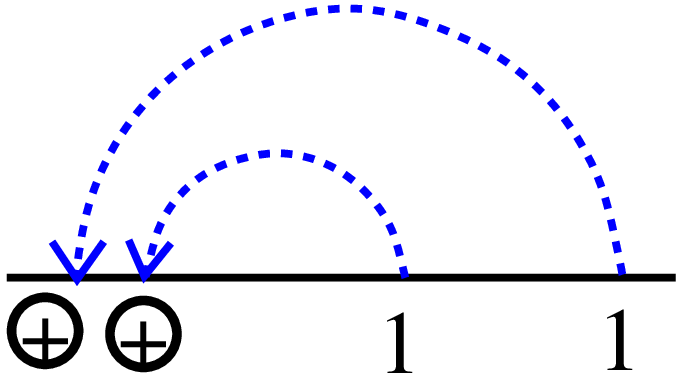}} \,\,+\,\, \scalebox{.15}{\psfig{figure=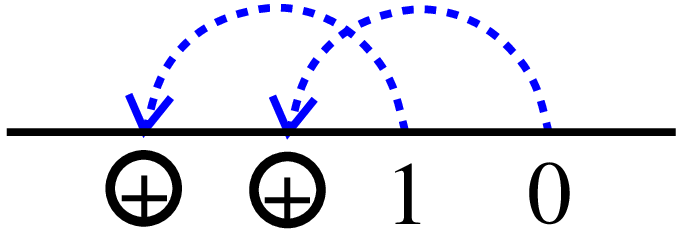}} \,\,+\,\, \scalebox{.15}{\psfig{figure=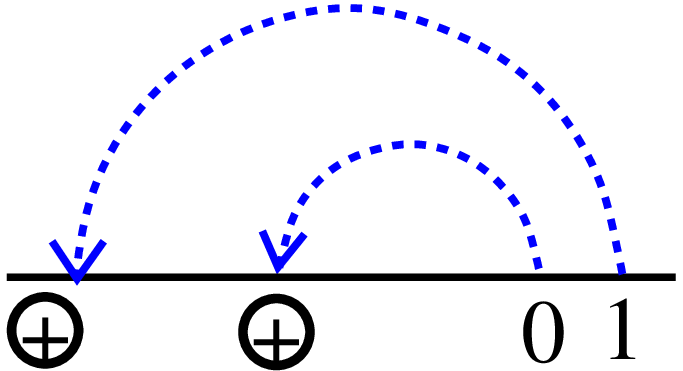}}  &=& \scalebox{.15}{\psfig{figure=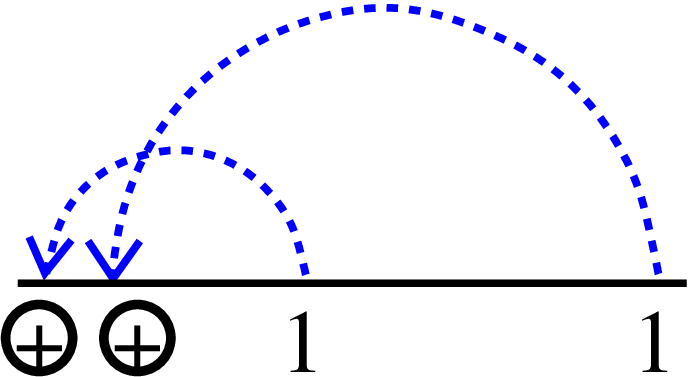}} \,\,+\,\, \scalebox{.15}{\psfig{figure=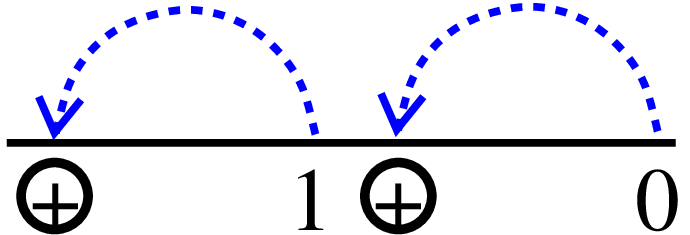}} \,\,+\,\, \scalebox{.15}{\psfig{figure=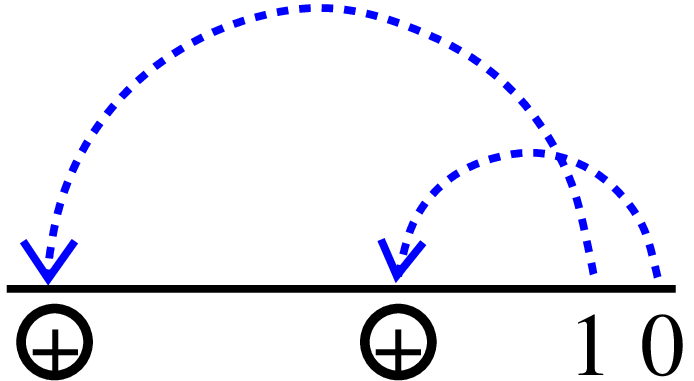}} 
\end{eqnarray*}
\caption{Three of 18 six-term relations needed to check invariance of $F_1$.} \label{relscheck}
\end{figure}

We note also that this combinatorial formula is independent of the GPV order two invariants. Recall from \cite{GPV} that the order $2$ GPV finite-type invariants for long knots are generated by the following formulae: 
\[
v_{21}(\cdot)=\left< \scalebox{.25}{\psfig{figure=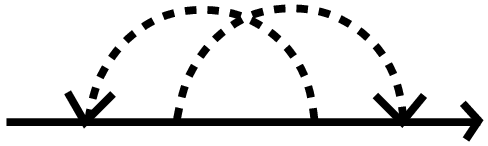}}, I(\cdot)\right>,\,\,\,\, v_{22}(\cdot)=\left< \scalebox{.25}{\psfig{figure=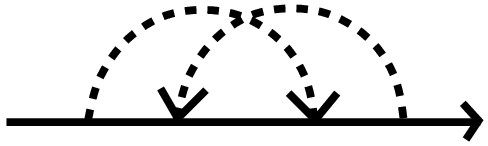}}, I(\cdot)\right> .
\]
where $I$ is the map which sums over all subdiagrams of a Gauss diagram (see Section \ref{functor}). Consider the Gauss diagram $D$ in Figure \ref{nontriv}. Let $P$ denote the Gaussian parity. We have $v_{21}(D)=v_{22}(D)=0$.  On the other hand, $\left<\left<F_1,I[P](D)\right>\right>=-2$.  We see that the invariant $v_{F_1}$ cannot be written as a linear combination of the invariants $v_{21}$ and $v_{22}$.
\begin{figure}[h]
\[
\begin{array}{cc}
\scalebox{.25}{\psfig{figure=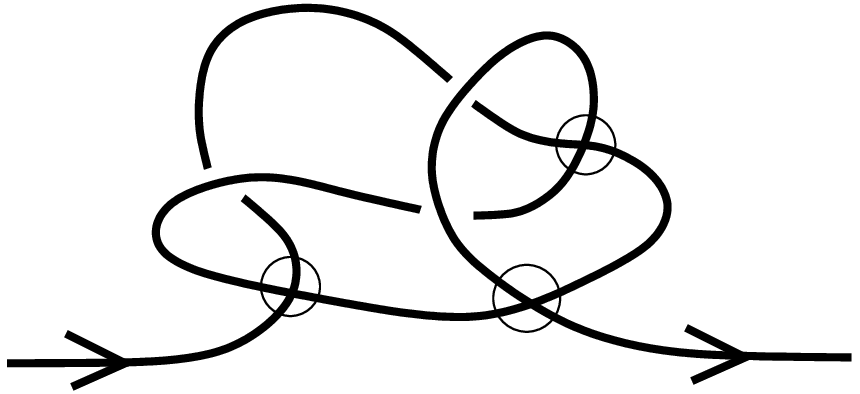}} & \scalebox{.25}{\psfig{figure=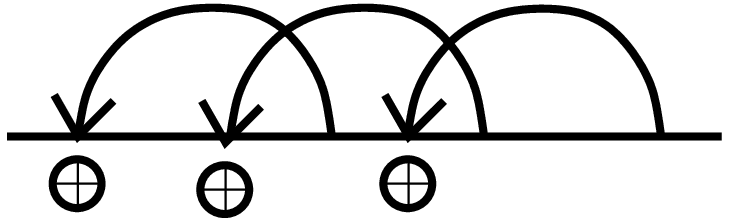}}\\
\end{array}
\]
\caption{$F_1$ is not a linear combination of $v_{21}$ and $v_{22}$.} \label{nontriv}
\end{figure}
\subsection{Parity of Flat Virtual Knots and Kauffman Finite-Type} Recall that a flat virtual knot \emph{diagram} is an equivalence class of virtual knot \emph{diagrams}. Two virtual knot diagrams are in the same flat equivalence class if they may be obtained from one another by a sequence of crossing changes, virtual moves, Reidemeister moves, and planar isotopies.

Flat virtual knots are represented in the plane as virtual knot diagrams where the over/under crossing information has been forgotten. The non-virtual crossings are indicated just as usual intersection points of two lines. For each such diagram, there corresponds a signed chord diagram. There is a virtual knot having either choice of sign in the flat equivalence class. However, there is only one choice of the direction of the arrow so that the chord diagram corresponds with the representation of the flat equivalence class. Changing both direction and sign of an arrow corresponds to switching the crossing from over to under or vice versa. For Gauss diagrams, this corresponds to changing the direction and sign of an arrow (see Figure \ref{gaussflat}).

\begin{figure}[h]
\[
D=\begin{array}{c}\scalebox{.25}{\psfig{figure=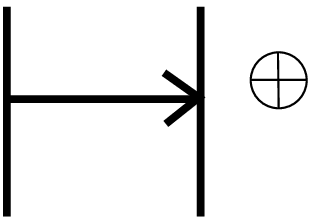}} \end{array},\,\,\,D'= \begin{array}{c}\scalebox{.25}{\psfig{figure=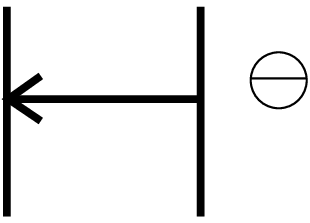}} \end{array}
\]
\caption{Equivalence Relation for Flat Virtual Knots} \label{gaussflat}
\end{figure}
This motivates the following definition. A parity $P$ is said to be a \emph{parity of flat virtual knots} if for all Gauss diagrams $D,D'$ which differ by the direction and sign of a single arrow, as in Figure \ref{gaussflat}, then $P$ assigns the same labels to the corresponding arrows of $D$ and $D'$. The Gaussian parity is an example of a parity of flat virtual knots. In addition, there are parities of flat virtual knots which arise from characteristic $\mathbb{Z}_2$-cohomology classes (see \cite{Ma6}).

\begin{lemma}[proof of Theorem 1] \label{thmonelemm2} If $P$ is a parity of flat virtual knots on $S^1$ or $\mathbb{R}$ and $v \in \text{Hom}_{\mathbb{Z}}\left(\mathscr{Q}_n,\mathbb{Q}\right)$, then $v \circ I_n[P]$ is a Kauffman finite-type invariant of order $\le n$.
\end{lemma}
\begin{proof} Suppose that $K$ is rigid 4-valent graph with one graphical vertex.  The resolution of this crossing may be expressed in terms of Gauss diagrams as $D-D'$, where $D$ and $D'$ are as in Figure \ref{gaussflat}. Now, every arrow of $D$ corresponds to an arrow of $D'$ having the same parity. We apply $I[P]$ to obtain:

\begin{eqnarray*}
I[P] : \begin{array}{c}\scalebox{.25}{\psfig{figure=switch1.eps}} \end{array} & \rightarrow & \begin{array}{c}\scalebox{.25}{\psfig{figure=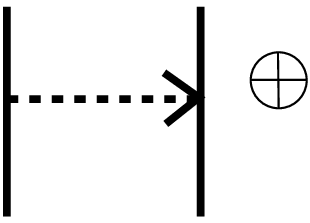}} \end{array}+\begin{array}{c}\scalebox{.25}{\psfig{figure=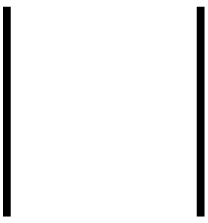}} \end{array} ,\\
I[P]: \begin{array}{c}\scalebox{.25}{\psfig{figure=switch2.eps}} \end{array} & \rightarrow & \begin{array}{c}\scalebox{.25}{\psfig{figure=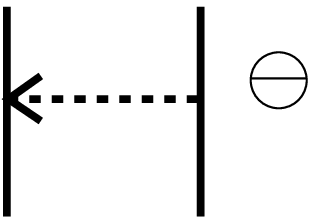}} \end{array}+\begin{array}{c}\scalebox{.25}{\psfig{figure=switch3.eps}} \end{array} .\\
\end{eqnarray*}
The second terms on the RHS of these equations are identical because $P$ is a parity of flat virtual knots.  Therefore, every term in $I[P](D-D')$ contains at least one arrow.  In general, $n+1$ graphical vertices will yield at least $n+1$ arrows in every nonzero term.  Hence, $v \circ I_{n}[P]$ vanishes on all 4-valent graphs with more than $n$ graphical vertices.
\end{proof}
\section{Theorem 2: Properties of Odd Arrow Groups}\label{secoddarrow} The odd arrow group $\mathscr{T}_n$ comes from the projection of $\mathscr{Q}_n$ which kills any diagrams which have an arrow labelled with a $0$. More specifically, let $E_1$ denote those diagrams having $\ge 1$ arrows labelled $0$ and perhaps some arrows labelled $1$.  We define the $n$-th odd arrow group, denoted $\mathscr{T}_n$, by the following projections. 
\[
\pi^{\mathscr{T}}_{n}:\mathscr{Q} \to \frac{\mathscr{Q}_n}{\left< E_1\right>}, \,\,\, \mathscr{T}_{n}:=\frac{\mathscr{Q}_n}{\left<E_1\right>}
\]
The symbol $\mathscr{T}$ is chosen due to the similarity between these invariants and discrete Taylor polynomials (see below).  In the following sections, we compute the rank of the groups $\text{Hom}_{\mathbb{Z}}(\mathscr{T}_n, \mathbb{Q})$. We also show that for any parity $P$, there is a virtualization invariant combinatorial formulae of order $n$ for every $n$. For the Gaussian parity, it is shown that there is a generating set of combinatorial $P$-formulae which are of Kauffman finite-type but not GPV finite-type.
\subsection{Simplifying the Projection to Odd Arrows} The first step in the proof of Theorem 2 is to find a presentation of the groups $\mathscr{T}_n$ in terms of only odd arrows. We consider each of the relations $\text{Q}1$, $\text{Q}2$ and $\text{Q}3$ in turn. 
\newline
\newline
\underline{Q1:} All Q1 relations are automatically satisfied since there is at least one arrow marked with the label 0.
\newline
\newline
\underline{Q2:} In this case, either both arrows are labelled 0 or both arrows are labelled 1. If both arrows are labelled 0, the relation is trivially satisfied.
\newline
\newline
If both arrows are labelled 1, the relation cannot be simplified further.  So the only Q2 relations needed are those with $\delta=1$ and all other arrows labelled 1.  Relations Q2 where all arrows (including those not in the drawn intervals) are labelled one will be denoted by $\text{Q2}^{(1)}$.
\newline
\newline
\underline{Q3:} Consider then the labellings of arrows involved in a Q3 relation. On the left hand side of the relation, there are arrows $\{i,j,k\}$.  On the right hand side, there are corresponding arrows $\{i',j',k'\}$.  By the definition of parity, either all three of the arrows are labelled 0 or only one is labelled 0.  If all arrows are 0, then the condition is trivially satisfied.

Otherwise, there are three cases: of the triple, only the arrows labeled $(i,i')$ are even or only the arrows labeled $(j,j')$ are even or only the arrows labeled $(k,k')$ are even. In each case, all but two of the labelled diagrams vanish.  The resulting three relations are given below.
\begin{eqnarray*}
\underline{\text{Q}3a^{(1)}:} &=& \begin{array}{c} \scalebox{.2}{\psfig{figure=q32.eps}}\end{array}=\begin{array}{c} \scalebox{.2}{\psfig{figure=q36.eps}}\end{array} \\
\underline{\text{Q}3b^{(1)}:} &=& \begin{array}{c} \scalebox{.2}{\psfig{figure=q33.eps}}\end{array}=\begin{array}{c} \scalebox{.2}{\psfig{figure=q37.eps}}\end{array} \\
\underline{\text{Q}3c^{(1)}:} &=& \begin{array}{c} \scalebox{.2}{\psfig{figure=q34.eps}}\end{array}=\begin{array}{c} \scalebox{.2}{\psfig{figure=q38.eps}}\end{array} \\
\end{eqnarray*}
We will refer to the collection $\{\text{Q}3a^{(1)},\text{Q}3b^{(1)},\text{Q}3c^{(1)}\}$ as the \emph{commutativity relations}. Note here that all of the arrows outside the intervals are also labelled 1. Let $\mathscr{A}^{(1)}$ denote those signed dashed arrow diagrams containing only arrows labelled $1$.  Let $A_n^{(1)}$ denote the set of dashed arrow diagrams having more than $n$ arrows, all of which are labelled 1. Collecting together the above statements gives the following lemma.
\begin{lemma} \label{on1thm} For every $n \ge 1$, there is an isomorphism of groups:
\[
\mathscr{T}_{n} \cong \frac{\mathbb{Z}[\mathscr{A}^{(1)}]}{\left<A_n^{(1)},\text{Q2}^{(1)},\text{Q}3a^{(1)},\text{Q}3b^{(1)},\text{Q}3c^{(1)} \right>}.
\]
\end{lemma}

\subsection{Some Example Invariants} The next step in the proof of Theorem 2 is to define a number of example invariants and show that they are realized by combinatorial $P$-formulae. The main argument relies upon discrete power series.  We review the necessary definitions for the discrete calculus in this section.

Consider first Gauss diagrams on $\mathbb{R}$.  Let $P$ be any parity of long virtual knots and $D \in \mathscr{D}$. Define $L_{\varepsilon}(D)$ ($R_{\varepsilon}$(D)) to be the number of arrows of $P(D)$ pointing left (respectively, right), signed $\varepsilon$, and labelled $1$. Define $\theta_L[P]:\mathscr{D} \to \mathbb{Z}$ and $\theta_R[P]:\mathscr{D} \to \mathbb{Z}$ as follows:
\begin{eqnarray*}
\theta_L[P](D) &=& L_{\oplus}(D)-L_{\ominus}(D), \\
\theta_R[P](D) &=& R_{\oplus}(D)-R_{\ominus}(D). \\
\end{eqnarray*}
Define $\theta_{L,R}^{n_1,n_2}[P]:\mathscr{D} \to \mathbb{Z}$ by the formula:
\[
\theta_{L,R}^{n_1,n_2}[P](D)=(\theta_{L}[P](D))^{n_1}(\theta_{R}[P](D))^{n_2}.
\]
\begin{proposition} $\theta_{L,R}^{n_1,n_2}[P]$ is an invariant of virtual knots for every parity $P$
\end{proposition}
\begin{proof} It is sufficient to show that $\theta_H[P]$, $H=L$ or $R$, satisfies the Reidemeister relations $\Omega 1$, $\Omega 2$, $\Omega 3$, where $H=L$ or $R$. Since the isolated arrow in an $\Omega 1$ move is always labelled $0$,  the number of arrows labelled $1$ on LHS and RHS of such a move is the same.  In an $\Omega 2$ move, both of the arrows have the same label and point in the same direction but have opposite sign.  The pair or arrows contribute $0$ to $\theta_H[P]$.

Consider finally the $\Omega 3$ move.  There are as many arrows labelled $1$ before and after the move.  Also, the move does not change the direction of the arrows.  The theorem of \"{O}stlund \cite{Ost} implies that it is sufficient to consider only the case where all the arrows involved in the move have the same sign. Thus, $\theta_H[P]$ has the same value on LHS and RHS of an $\Omega 3$ move. 
\end{proof}
\begin{lemma}[proof of Theorem \ref{thm1}] \label{thm1lemm2} If $P$ is the Gaussian parity, then $\theta_{L,R}^{n_1,n_2}[P]$ is not a $GPV$ finite-type invariant of order $n$ for any $n$.
\end{lemma}
\begin{proof} Consider a fractional twist sequence of type FSZ which is based on the empty Gauss diagram on $\mathbb{R}$.  The invariant $\theta_{L}^{n_1}[P]$ is nonzero on the even terms of the fractional twist sequence and zero on the odd terms of the twist sequence.  Since it is nonconstant and has infinitely many zeros, it cannot be a polynomial on this fractional twist sequence.  Hence, by Theorem \ref{twist}, $\theta_{L,R}^{n_1,n_2}[P]$ is not a $GPV$ finite-type invariant of order $n$ for any $n$
\end{proof}
It will now be shown that $\theta_{L,R}^{n_1,n_2}[P]$ can be represented by a combinatorial $P$-formula. The argument utilizes the discrete derivative.  Recall that if $F: \mathbb{Z} \to G$, where $G$ is an abelian group, the \emph{discrete derivative} at $x$ is given by:
\[
(\partial F)(x)=F(x+1)-F(x).
\]
The process can be iterated to find higher order discrete derivatives: $(\partial^n F)(x)=\partial (\partial^{n-1} F)(x)$. Also set $\partial^0$ to be the identity function.  If $F:\mathbb{Z}^m \to G$, partial derivatives may be defined by:
\[
(\partial_i F)(x_1,x_2,\ldots, x_i,\ldots x_n)=F(x_1,x_2,\ldots, x_i+1,\ldots x_n)-F(x_1,x_2,\ldots, x_i,\ldots x_n).
\]
It is easy to see that $\partial_i \partial_j=\partial_j \partial_i$ and hence the equality of mixed partials also holds in the discrete case.  

To define \emph{discrete power series}, a notion of power function is defined.  For $\alpha \in \mathbb{N}$, define:
\[
z^{\alpha}=z(z-1)(z-2)\cdot \ldots \cdot (z-\alpha+1).
\]
For $\alpha=0$, define $z^0=1$. If $\alpha=(\alpha_1,\ldots,\alpha_n)$ is an $n$-tuple of nonnegative integers, we define $z^{\alpha}=z_1^{\alpha_1} \cdot \ldots \cdot z_n^{\alpha_n}$ and $\partial^{\alpha}=\partial^{\alpha_1}_1 \cdots \partial^{\alpha_n}_n$  The degree of $z^{\alpha}$ is given by $|\alpha|=\alpha_1+\alpha_2+\ldots+\alpha_n$. The factorial of $\alpha$ is $\alpha!=\alpha_1! \alpha_2! \cdot \ldots \cdot \alpha_n !$. The discrete power series of $F$, $\Sigma_F:\mathbb{Z}^m \to G$ is defined to be:
\[
\Sigma_F(z_1,z_2,\ldots, z_m)=\lim_{n \to \infty} \sum_{|\alpha| \le n} \frac{(\partial^{\alpha} F)(\vec{0})}{\alpha !} z^{\alpha}.
\]
A straightforward application of the binomial theorem shows that $\Sigma_F=F$ (see for example \cite{C1}). This is the key observation for finding combinatorial formulae.  Let $\mathbb{Z}_{\le n}[z_1,z_2]$ denote the polynomials with integer coefficients in the variables $z_1$ and $z_2$ having total degree at most $n$.  Denote by $B_{\le n}$ those products of binomial coefficients as follows:
\[
B_{\le n}=\left\{\left( \begin{array}{c} n_1 \\ k_1 \end{array} \right)\left( \begin{array}{c} n_2 \\ k_2 \end{array} \right): k_1+k_2 \le n, 0 \le k_i \le n_i \right\}.
\]
\begin{lemma} \label{coefflemm} Every $h \in \mathbb{Z}_{\le n}[z_1,z_2]$ may be written as an integral linear combination of elements in $B_{\le n}$. In particular, the coefficient of $\left(\begin{array}{c} z_1 \\ k_1 \end{array} \right)\left( \begin{array}{c} z_2 \\ k_2 \end{array} \right)$ is given by:
\[
(\partial^{(k_1,k_2)} h)(\vec{0}).
\]  
\end{lemma}
\begin{proof} This follows from the discrete power series of $h$ and the fact that $z^{(k_1,k_2)}/k_1! k_2!$ can be identified with an element of $B_{\le n}$.
\end{proof}
\begin{lemma}[proof of Theorem \ref{thm1}] \label{thm1lemm3} For any parity $P$, the invariants $\theta_{L,R}^{n_1,n_2}[P]$ can be represented  by a parity enhanced combinatorial formula $F_{L,R}^{n_1,n_2}$, where $\left<\left<F^{n_1,n_2}_{L,R},\cdot\right>\right> \in \text{Hom}_{\mathbb{Z}}(\mathscr{T}_{n_1+n_2},\mathbb{Q})$.
\end{lemma}
\begin{proof} Consider the case of $\theta_{H}^n[P]$, where $H=L$ or $R$. By the binomial theorem, we may write $\theta_{H}^n[P](\cdot)$ as a polynomial of degree $n$ in the variables $z_1=H_{\oplus}(\cdot)$ and $z_2=H_{\ominus}(\cdot)$. By Lemma \ref{coefflemm}, this polynomial may be written as an integral linear combination of elements of $B_{\le n}$. Let $D(k_1,k_2)$ denote those dashed arrow diagrams having $k_1$ arrows of the form $H_{\oplus}$ and labelled $1$, and $k_2$ arrows of the form $H_{\ominus}$ and labelled $1$. We define a combinatorial formula $F=F_H^n$ for $\theta_{H}^n[P]$ as follows:
\[
F_H^n= \sum_{\stackrel{(k_1,k_2)}{0 \le k_1+k_2 \le n}} (\partial^{(k_1,k_2)} \theta_H^n[P])(\vec{0}) \sum_{D \in D(k_1,k_2)} D.
\] 
This is an integral linear combination of Gauss diagrams.  By definition of combinatorial formula, we have $\left<\left<F^n_H,I[P](X)\right>\right>=\theta^n_H[P](X)$ for all Gauss diagrams $X$.  

It must be shown that $F$ satisfies each of the relations for the parity enhanced Polyak algebra. Since all arrows of all terms in $F$ are labelled with a $1$, it follows that $\left<\left<F,\text{Q1} \right>\right>=0$. 

For a $\text{Q3}$ relation, we need only consider the relations $\text{Q3}a^{(1)}$, $\text{Q3}b^{(1)}$, and $\text{Q3}c^{(1)}$.  Since each of these relations preserves the number arrows pointing in a given direction and the signs of these arrows, it follows that $\left<\left<F,\text{LHS}(\text{Q3}d^{(1)})-\text{RHS}(\text{Q3}d^{(1)}) \right>\right>=0$ for $d=a,b,c$.

To complete the proof, it must be shown that the $\text{Q2}$ relations are satisfied.   From the $\text{Q2}$ relation and the definition of $F$, it follows that it is sufficient to show:
\[
(\partial^{(k_1,k_2)} \theta_H^n[P])(\vec{0})+(\partial^{(k_1-1,k_2)} \theta_H^n[P])(\vec{0})+(\partial^{(k_1,k_2-1)} \theta_H^n[P])(\vec{0})=0.
\]
This relation may be rewritten as follows:
\[
(\partial^{(k_1,k_2)}+\partial^{(k_1-1,k_2)}+\partial^{(k_1,k_2-1)})(z_1-z_2)^{n}(0,0) \\
= (\partial^{(1,1)}+\partial^{(0,1)}+\partial^{(1,0)})(\partial^{(k_1-1,k_2-1)})(z_1-z_2)^{n}(0,0) .
\]
Define $h(z_1,z_2)=(\partial^{(k_1-1,k_2-1)})(z_1-z_2)^{n}(z_1,z_2)$. Proceeding with the computation, we have:
\begin{eqnarray*}
(\partial^{(1,1)}+\partial^{(0,1)}+\partial^{(1,0)})(h)(z_1,z_2) &=&  h(z_1+1,z_2+1)-h(z_1,z_2+1)-h(z_1+1,z_2)+h(z_1,z_2)\\
&+& h(z_1,z_2+1)-h(z_1,z_2)+h(z_1+1,z_2)-h(z_1,z_2)\\
&=& h(z_1+1,z_2+1)-h(z_1,z_2)\\
&=& 0.
\end{eqnarray*}
The last equality follows from the definition of $h$.  Thus, $\left<\left<F,\text{Q2}\right>\right>=0$.  The general case of $\theta^{n_1,n_2}_{L,R}[P]$ follows similarly.  This completes the proof.
\end{proof}

Now that it has been shown that how to represent $\theta_{L,R}^{n_1,n_2}[P]$ as a combinatorial formula, we can show that $\text{Hom}_{\mathbb{Z}}(\mathscr{Q}_n, \mathbb{Q})$ has a virtualization invariant combinatorial formula for every $n$.  This phenomenon does not occur in the Polyak algebra (see \cite{C2}).  
\begin{lemma} [proof of Theorem \ref{thm1}] \label{thm1lemm4} For every $n \in \mathbb{N}$ and parity $P$, there exists a combinatorial $P$-formula of order $n$ which is invariant under the virtualization move.
\end{lemma}
\begin{proof} Note that the formula for $T=L_{\oplus}-L_{\ominus}+R_{\oplus}-R_{\ominus}$ is invariant under the virtualization move. Then $T^n$ is a virtualization invariant.  Moreover, $T^n$ can be written as an integral linear combination of the invariants $\theta_{L,R}^{j,k}[P]$, where $j+k=n$.  By Lemma 11, it follows that $T^n$ has a combinatorial formula.   
\end{proof}

For a Gauss diagram $D$ on $S^1$ and any parity $P$, let $N_{\varepsilon}(D)$ denote the number of arrows of $D$ labelled 1 and signed $\varepsilon$.  Let $\theta_N[P](D)=N_{\oplus}(D)-N_{\ominus}(D)$.  The above arguments show that for any $n \in \mathbb{N}$, $\theta_N^n[P]$ has a virtualization invariant combinatorial formula of order $n$.

\subsection{Rank of the Odd Arrow Groups} It remains to prove the claim of Theorem 2 that $\text{Hom}_{\mathbb{Z}}(\mathscr{T}_n,\mathbb{Q})$ is a finitely generated free abelian group and to compute its rank.  This is done using a short exact sequence argument and the example invariants of the previous section.

We will prove the rank theorem only in the case of diagrams on $\mathbb{R}$.  The case on the $S^1$ follows similarly.  Consider the short exact sequence of groups, where $\mathscr{T}_n \to \mathscr{T}_{n-1}$ is the natural surjection:
\[
\xymatrix{0 \ar[r] & \mathscr{K}_n \ar[r] & \mathscr{T}_{n} \ar[r] & \mathscr{T}_{n-1} \ar[r] & 0},
\]
\[
\mathscr{K}_n:=\frac{\left<A_{n-1}^{(1)},\text{Q2}^{(1)},\text{Q}3a^{(1)},\text{Q}3b^{(1)},\text{Q}3c^{(1)} \right>}{\left<A_n^{(1)},\text{Q2}^{(1)},\text{Q}3a^{(1)},\text{Q}3b^{(1)},\text{Q}3c^{(1)} \right>}.
\]
Taking the dual of the sequence gives another sequence:
\[
\xymatrix{0 \ar[r] & \text{Hom}_{\mathbb{Z}}(\mathscr{T}_{n-1},\mathbb{Q}) \ar[r] & \text{Hom}_{\mathbb{Z}}(\mathscr{T}_{n},\mathbb{Q}) \ar[r] & \text{Hom}_{\mathbb{Z}}(\mathscr{K}_{n}, \mathbb{Q}) }.
\]
\begin{lemma} There exists a finitely generated free abelian group $\mathscr{B}_n$ of rank $n+1$ such that the following sequence is exact.
\[
\xymatrix{0 \ar[r] & \text{Hom}_{\mathbb{Z}}(\mathscr{T}_{n-1},\mathbb{Q}) \ar[r] & \text{Hom}_{\mathbb{Z}}(\mathscr{T}_{n},\mathbb{Q}) \ar[r] & \text{Hom}_{\mathbb{Z}}(\mathscr{B}_{n}, \mathbb{Q}) \ar[r] & 0 }
\]
\end{lemma}
\begin{proof} Let $B_n$ denote the group of dashed arrow diagrams where all the arrows are labelled 1 and have no signs.  Let $\Delta |Q3|_n$ denote those commutativity relations having exactly $n$ unsigned arrows, all of which are labelled $1$. Define $\mathscr{B}_n=\frac{B_n}{\left<\Delta |Q3|_n \right>}$.  It is easy to see that there is a surjection $\mathscr{B}_n \to \mathscr{K}_n$ (see \cite{C2,Pol}).  Hence there is an injection $\text{Hom}_{\mathbb{Z}}(\mathscr{K}_n,\mathbb{Q}) \to \text{Hom}_{\mathbb{Z}}(\mathscr{B}_n, \mathbb{Q})$. 

Let $\beta_i$ denote the equivalence class of diagrams in $\mathscr{B}_n$ which have $i$ arrows pointing right and $n-i$ arrows pointing left. It is clear from the commutativity relations that $\mathscr{B}_n$ is a free abelian group which is generated by the $n+1$ elements  $\beta_0,\ldots, \beta_n$. Moreover, the rank of $\mathscr{B}_n$ is $n+1$.

To complete the proof, it remains to show that the composition below is a surjection:
\[
\xymatrix{\text{Hom}_{\mathbb{Z}}(\mathscr{T}_n,\mathbb{Q}) \ar[r] & \text{Hom}_{\mathbb{Z}}(\mathscr{K}_n,\mathbb{Q}) \ar[r] &  \text{Hom}_{\mathbb{Z}}(\mathscr{B}_n, \mathbb{Q})}.
\]
Consider the invariant $\theta^{n_1,n_2}_{L,R}[P]$ where $n_1+n_2=n$.  By Lemma \ref{thm1lemm3}, there is a combinatorial $P$-formula $F=F^{n_1,n_2}_{L,R}$ such that $\left<\left<F,I[P](\cdot)\right>\right>=\theta^{n_1,n_2}_{L,R}[P](\cdot)$. By following the various maps around, one can see that under the above composition, we have $\left<\left<F,\cdot\right>\right> \to n_1! n_2 ! \beta_{n_2}^*$, where $\beta_{n_2}^*$ is the functional which is $1$ on $\beta_{n_2}$ and $0$ every equivalence class distinct from $\beta_{n_2}$. Hence the composition is onto.  The claim regarding the kernel of the composition follows from the definitions of the various maps.
\end{proof}
\begin{lemma}[proof of Theorem \ref{thm1}] \label{thm1lemm1} For Gauss diagrams on $\mathbb{R}$, the rank of $\text{Hom}_{\mathbb{Z}}(\mathscr{T}_n, \mathbb{Q})$ is $n(n+3)/2$. The group is generated by the functionals $\left<\left<F^{n_1,n_2}_{L,R}, \cdot \right>\right>$ where $1 \le n_1+n_2 \le n$.
\end{lemma}
\begin{proof} Since $\text{Hom}_{\mathbb{Z}}(\mathscr{B}_n,\mathbb{Q})$ is free abelian and finitely generated, it follows that the short exact sequence above is split.  This leads to the direct sum decomposition:
\[
\text{Hom}_{\mathbb{Z}}(\mathscr{T}_n, \mathbb{Q}) \cong \text{Hom}_{\mathbb{Z}}(\mathscr{B}_n, \mathbb{Q}) \oplus \text{Hom}_{\mathbb{Z}}(\mathscr{T}_{n-1}, \mathbb{Q}). 
\]
The result follows by induction.  
\end{proof}
A similar argument can be used to show that the rank of $\text{Hom}_{\mathbb{Z}}(\mathscr{T}_n,\mathbb{Q})$ is $n$. This completes the proof of Theorem \ref{thm1}.
\section{Theorem 3: Decomposing GPV Formulae}\label{seceodecomp} In the present section, it is shown how to decompose a homogeneous GPV combinatorial formula into an even part and an odd part.  For the Gaussian parity, it is shown that the even and odd parts are of Kauffman finite-type but not GPV finite-type whenever the original formula is nonconstant on the classical knots. In addition we give an example to show that not all invariants of order $2$ in $\mathscr{O}_2$ arise from functorality.  These sections establish the proof of Theorem 3.
\subsection{Functorality, the Polyak Algebra, and Even Parts of GPV Formulae} \label{functor} The parity enhanced Polyak algebra is an extension of the notion of combinatorial formulae for finite-type invariants of classical knots. Recall the definition of the Polyak algebra from \cite{GPV}.  Let $\mathscr{A}$ be the set of dashed signed arrow diagrams.  For relations, we have P1, P2, and P3 which are simply Q1, Q2, and Q3 with all the labels erased.  Set $\Delta P=\left<\text{P1},\text{P2},\text{P3}\right>$ and, as before, $A_n$ the set of diagrams having more than $n$ arrows.  Then we define:
\[
\mathscr{P}=\frac{\mathbb{Z}[\mathscr{A}]}{\Delta P},\,\,\,\mathscr{P}_n=\frac{\mathbb{Z}[\mathscr{A}]}{\left<A_n,\Delta P \right>}, \,\, \pi_n^{\mathscr{P}}: \mathscr{P} \to \mathscr{P}_n.
\]
For $v \in \text{Hom}_{\mathbb{Z}}(\mathscr{P}_n,\mathbb{Q})$, the composition $v \circ \pi_n \circ I$ is a finite-type invariant of virtual knots, where:
\[
I(D)=\sum_{D' \subset D} i(D).
\]
The sum is taken over all subdiagrams of $D$. On the other hand, we have the \emph{even arrow groups} that arise from a projection of $\mathscr{Q}_n$. Let $O_1$ denote those diagrams in $\mathscr{A}^{(1,0)}$ which have the label $1$ at some arrow. Then define:
\[
\pi_n^{\mathscr{E}}:\mathscr{Q} \to \frac{\mathscr{Q}_n}{\left< O_1\right>}, \,\,\, \mathscr{E}_{n}:=\frac{\mathscr{Q}_n}{\left<O_1\right>}.
\]
We will show that certain combinatorial $P$-formulae are inherited from the Polyak groups.  The result follows essentially from \emph{functorality}. To be precise, let $P$ be any parity and let $f:\mathscr{D} \to \mathscr{D}$ denote the map which deletes all the arrows in a diagram which are odd relative to $P$.  The map $f$ is called the \emph{functorial map}.

\begin{lemma} Let $P$ be any parity and $f:\mathscr{D} \to \mathscr{D}$ be the functorial map.  If $D_1 \rightleftharpoons D_2$ is a Reidemeister move on Gauss diagrams, then either $f(D_1)=f(D_2)$ or $f(D_1) \rightleftharpoons f(D_2)$. Hence, if $K_1$ and $K_2$ are virtual knots having Gauss diagrams $D_1$ and $D_2$, respectively, and $K_1'$ and $K_2'$ are virtual knots having Gauss diagrams $f(D_1)$ and $f(D_2)$, then either $K_1'=K_2'$ or $K_1'$ and $K_2'$ are equivalent by a Reidemeister move and some detour moves.
\end{lemma}
\begin{proof} If the affected arrows in the move contain only even arrows, then $f(D_1) \rightleftharpoons f(D_2)$ as moves on diagrams.  If the affected arrows contain any odd arrows (so that the move is an $\Omega 2$ or $\Omega 3$ move), then $f(D_1)=f(D_2)$.
\end{proof}

The map $f$ indeed has the properties of a functor.  In the present case of combinatorial $P$-formulae, it is seen that $f$ behaves nicely with respect to the commutative diagram below. Indeed, let $\eta:\mathbb{Z}[\mathscr{A}] \to \mathbb{Z}[\mathscr{A}^{(1,0)}]$ be the map which labels every arrow of every diagram as 0. Let $\eta^e:\mathbb{Z}[\mathscr{A}^{(1,0)}] \to \mathbb{Z}[\mathscr{A}^{(1,0)}]$ be the map which projects any diagram with an arrow labelled one to the zero element of the group.  The above lemma shows that the following diagram commutes and descends to the quotient groups $\mathscr{P}$ and $\mathscr{Q}$.
\[
\xymatrix{
\mathbb{Z}[\mathscr{D}] \ar[rr]^{I[P]} \ar[d]_{f} & & \mathbb{Z}[\mathscr{A}^{(1,0)}] \ar[d]^{\eta^e} \\
\mathbb{Z}[\mathscr{D}] \ar[r]_{I} & \mathbb{Z}[\mathscr{A}] \ar[r]_{\eta} & \mathbb{Z}[\mathscr{A}^{(1,0)}]
}
\]
A consequence of this result is the following lemma which identifies the even arrow groups with the Polyak groups.
\begin{lemma}\label{evenpol} For every parity $P$, there is an isomorphism $\eta_n: \mathscr{P}_n \to \mathscr{E}_n$ for all $n \in \mathbb{N} \cup \{\infty\}$.  
\end{lemma}
For $F \in \mathbb{Z}[\mathscr{A}]$, define the \emph{even part} of $F$ to be $F^e=\eta(F)$. The previous lemma guarantees that if $F$ is a GPV combinatorial formula of order $\le n$, then $\left<\left<F^e,\cdot\right>\right> \in \text{Hom}_{\mathbb{Z}}(\mathscr{E}_n,\mathbb{Q})$. 

In the Gaussian parity, every arrow in the Gauss diagram $D$ of a classical knot is labelled 0. Hence it follows that $\left<F,I(D)\right>=\left<\left<F^e,I[P](D)\right>\right>$ for every GPV combinatorial formula $F$.  It is in this sense that the parity enhanced formulae extend the GPV formulae for finite-type invariants of classical knots.

\subsection{Odd parts of GPV formulae} The second part of the proof of Theorem 3 is to show that the complement of the even part of a homogeneous GPV formula is a combinatorial $P$-formula. We will define the odd part of a homogeneous formula and prove that it is in fact a combinatorial $P$-formula.  

An \emph{homogeneous} GPV combinatorial formula of order $n$ is a GPV combinatorial formula $F$ having exactly $n$ arrows in every summand.  Examples of order $2$ are the invariants $v_{21}$ and $v_{22}$. Also, there is the well-known Casson invariant. For order three, there exist examples which have been found by a \emph{Mathematica} program (see Figure \ref{hom3}).
\newline
\begin{figure}[h]
\centerline{\scalebox{1}{\psfig{figure=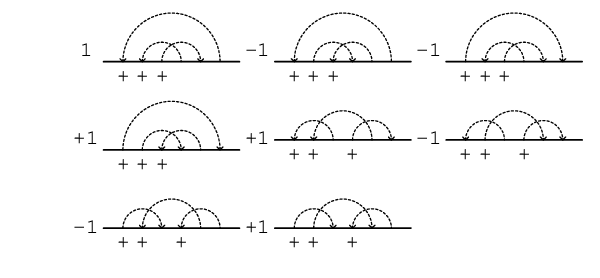}}}
\caption{A homogeneous GPV formula of order 3} \label{hom3}
\end{figure}

Recall that a GPV combinatorial formulae of order $\le n$ is an element $F \in \mathbb{Z}[\mathscr{A}]$ where every summand has $\le n$ arrows.  Hence $F$ may be written uniquely as $F=\sum_{D \in \mathscr{A}} \alpha_D D$ for some $\alpha_D \in \mathbb{Z}$, where all but finitely many $\alpha_D$ are zero.  For all $D \in \mathscr{A}$, set $\text{coeff}(D,F)=\alpha_D$. If $\displaystyle{F=\sum_{D \in \mathscr{A}} \text{coeff}(D,F) \cdot D}$ is a GPV combinatorial formula that is homogeneous of order $n$, define $F^o$ as follows.  If $\text{coeff}(D,F) \ne 0$, the contribution to $F^o$ is the sum over all labellings of the arrows of $D$ with zeros and ones so that the result has at least one arrow labelled 1. Hence there are a total of $2^{n}-1$ diagrams in $F^o$ arising from the contribution of $D$. This sum will be denoted by:
\[
\sum_{D^{(1)} \subset D} D^{(1)},
\]
where the sum is over all $2^n-1$ ways that the arrows can be labelled so that the result has at least one odd arrow.  Define
\[
F^o=\sum_{D \in \mathscr{A}}  \sum_{D^{(1)} \subset D} \text{coeff}(D,F) \cdot D^{(1)}.
\]
If $n=0$ and $F$ is the empty diagram, we define $F^o=0$.

The odd part of a GPV formula can be considered as a functional on a projection of $\mathscr{Q}_n$.  Indeed, let $E_n$ denote those subdiagrams which have $\ge n$ arrows labelled $0$.  Let $A^n$ denote those diagrams which have fewer than $n$ arrows. We define projections as follows:
\[
\pi_n^{\mathscr{O}}:\mathscr{Q}_n \to \frac{\mathscr{Q}_n}{\left<E_n, A^n \right>}, \,\, \mathscr{O}_n:=\frac{\mathscr{Q}_n}{\left<E_n, A^n \right>}.
\]
The distinct nonzero equivalence classes in $\mathscr{O}_n$ all have representatives having $n$ arrows, not all of which are labelled $0$.
\begin{lemma}[proof of Theorem \ref{thm2}] \label{gpvpara} If $F$ is a GPV combinatorial formula on $S^1$ or $\mathbb{R}$ which is homogeneous of order $n$, then:
\[
\left<\left< F^o, \cdot \right>\right> \in \text{Hom}_{\mathbb{Z}} \left( \mathscr{O}_n,
\mathbb{Q}\right).
\]
Hence, if $P$ is any parity, $\left<\left< F^o, I[P](\cdot)\right>\right>$ is an invariant of virtual knots (or virtual long knots).
\end{lemma}
\begin{proof} The formula vanishes on all diagrams having more than $n$ arrows, less than $n$ arrows, or $n$ even arrows.  Hence, it is sufficient to prove that $\left<\left<F^o,r\right>\right>=0$ for all $r \in \Delta Q$.
\newline
\newline
\underline{Q1:} Since $F$ is a GPV invariant. Any summand $D$ in $F$ having an isolated arrow must have $\text{coeff}(D,F)=0$.  By construction of $F^o$, it follows that $\left<\left<F^o, \text{Q1} \right>\right>=0$.
\newline
\newline
\underline{Q2:} We have that $\left<F,\cdot\right>$ satisfies all P2 relations.  Since $F$ is homogeneous of order $n$, the P2 relations may be divided into two types:
\[
\begin{array}{c} \scalebox{.2}{\psfig{figure=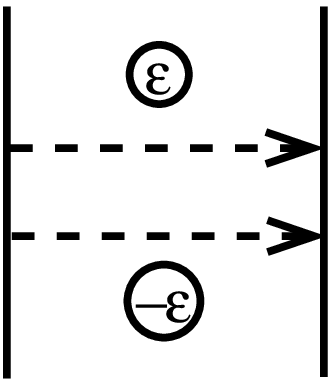}} \end{array}=0,\,\,\,\, \begin{array}{c} \scalebox{.2}{\psfig{figure=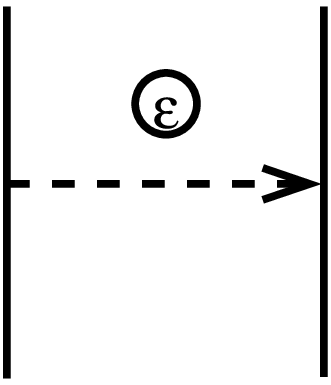}} \end{array}+\begin{array}{c} \scalebox{.2}{\psfig{figure=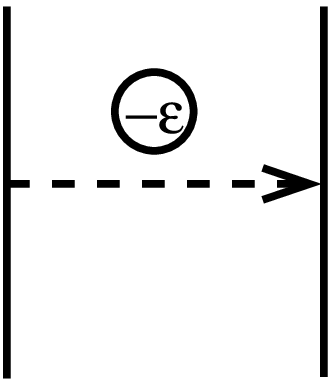}} \end{array}=0.
\]
The same division occurs in the Q2 relations for $F^o$. The two arrows involved in the $Q2$ relation are either both labelled 0 or both labelled 1. If they are both labelled 1, any diagram containing them has at least one arrow labelled 1. Hence, the value of $F^o$ on the two types agrees exactly with the value of $F$. Thus, $\left<\left<F^o,\text{Q2}\right>\right>=0$.

On the other hand, suppose that the arrows are both labelled 0. Then the diagram will be counted only when there is an arrow outside the drawn interval which is labelled 1. In this case, it occurs in all three diagrams in the P2 relation.  Hence the value of $F^o$ agrees with the value of $F$ on the two types and $\left<\left<F^o,\text{Q2}\right>\right>=0$.
\newline
\newline
\underline{Q3:} Since $F$ is homogeneous of order $n$, the P3 relations also split into two types:
\begin{eqnarray*}
\underline{A:} \,\,\, \begin{array}{c}\scalebox{.2}{\psfig{figure=q32.eps}} \end{array}+\begin{array}{c}\scalebox{.2}{\psfig{figure=q33.eps}} \end{array}+\begin{array}{c}\scalebox{.2}{\psfig{figure=q34.eps}} \end{array} &=&\\ \begin{array}{c}\scalebox{.2}{\psfig{figure=q36.eps}} \end{array}+\begin{array}{c}\scalebox{.2}{\psfig{figure=q37.eps}} \end{array}+\begin{array}{c}\scalebox{.2}{\psfig{figure=q38.eps}} \end{array} & &, \\
\end{eqnarray*}
\begin{eqnarray*}
\underline{B:}\,\,\, \begin{array}{c}\scalebox{.2}{\psfig{figure=q31.eps}} \end{array} &=&
\begin{array}{c}\scalebox{.2}{\psfig{figure=q35.eps}} \end{array}\,. \\
\end{eqnarray*}
The same division occurs for $F^o$ in Q3.

The sets $\{i,j,k\}$ and $\{i',j',k'\}$ contain either three arrows labelled 0 or two arrows labelled 1.

Suppose that all three are labelled 0.  Consider first the six term relation $A$.  The only way that any of the six diagrams is counted is if there is an arrow outside the drawn intervals which is labelled 1.  This arrow appears in all six diagrams, whence the value of $F^o$ on the relation agrees exactly with the value of $F$ on the relation. Hence, $\left<\left<F^o,\text{LHS}(\text{Q3})-\text{RHS}(\text{Q3})\right>\right>=0$. Similarly, $B$ is satisfied if all three drawn arrows are labelled 0.

Suppose that there are exactly two arrows in each of $\{i,j,k\}$ and $\{i',j',k'\}$ which are labelled 1. For the relation, the label on $x \in \{i,j,k\}$ agrees with the label on $x'\in \{i',i',k'\}$.  Moreover, any choice of two arrows from the sets $\{i,j,k\}$ and $\{i',j',k'\}$ contains at least one arrow which is labelled 1.  Therefore, each diagram in the relation $A$ has at least one odd arrow.  The diagrams in $B$ also have at least one arrow labelled 1.  Thus, $F^o$ agrees with $F$ on all such relations and $\left<\left<F^o,\text{LHS}(\text{Q3})-\text{RHS}(\text{Q3})\right>\right>=0$.
\end{proof}
\subsection{Even/Odd Decomposition Theorem} The main part of Theorem \ref{thm2} is the decomposition theorem. 

\begin{lemma}[proof of Theorem \ref{thm2}] \label{evenodddecomp} Let $F$ be a GPV combinatorial formula which is homogeneous of order $n$.  Then for any parity $P$, there is decomposition of $F$ into its even and odd parts:
\[
\left<F,I(\cdot) \right>=\left<\left<F^o,I[P](\cdot)\right>\right>+\left<\left< F^e, I[P](\cdot)\right>\right>.
\]
\end{lemma}
\begin{proof} Suppose that $n \ge 1$.  Let $D \in \mathscr{D}$ and $D' \subset P(D)$ be a subdiagram with corresponding arrows labelled as prescribed by the parity. If $D'$ does not have exactly $n$ arrows, then LHS and RHS of both equations are zero regardless of any parity considerations. Assume then that $D'$ has precisely $n$ arrows.

Suppose first that all the arrows of $D'$ are labelled zero. Then $\left<\left<F^o,i(D')\right>\right>=0$ and $\left<F,i(D')\right>=\left<\left<F^e,i(D')\right>\right>$.

On the other hand, suppose that at least one of the arrows in $D'$ is labelled one.  Then $\left<\left<F^e,i(D')\right>\right>=0$.  If $D'$ (forgetting all numerical markings) satisfies $\text{coeff}(D',F)=0$, then the desired decomposition is trivially true.  If $\text{coeff}(D',F) \ne 0$ then we have by definition of $F^o$ that:
\[
\left< F, i(D') \right>=\text{coeff}(D',F)=\left<\left<F^o,i(D') \right>\right>.
\]
Thus, the formula holds. If $n=0$, then $F$ is the empty diagram.  Since, $F^e$ is the empty diagram and $F^o=0$, the formula also holds in this case.
\end{proof}
\hspace{1cm}
\newline
\textbf{Example:} A natural question to ask is whether there are any invariants in $\mathscr{O}_{n}$ which do not arise from functorality. Indeed, there are combinatorial $P$-formulae whose associated invariant cannot be written as a linear combination of the even and odd parts of GPV formulae. Consider for example the formula $F_1$ in Figure \ref{nontrivinv} and let $P$ denote the Gaussian parity. Let $D$ denote the Gauss diagram in Figure \ref{nontriv}. As mentioned in Section \ref{nontrivsec}, we have that $\left<\left<F_1,I[P](D)\right>\right>=-2$.  On the other hand, if $F_{21}$ and $F_{22}$ denote the combinatorial formula for the generators $v_{21}$ and $v_{22}$ of the Polyak group $\mathscr{P}_2$, then:
\[
\left<\left<F_{21}^o,I[P](D) \right>\right>=\left<\left<F_{21}^e,I[P](D)\right> \right>=\left<\left<F_{22}^o,I[P](D) \right>\right>=\left<\left<F_{22}^o,I[P](D)\right>\right>=0.
\]
Hence, for the Gaussian parity, $F_1$ does not arise from functorality in the sense that is not a linear combination of the even and odds part of a $GPV$ formula.
\subsection{Even and Odd Parts are not of GPV Finite-Type} To complete the last part of Theorem 3, it is shown that in the Gaussian parity, the even and odd parts of a homogeneous GPV formula $F$ of order $n$ are not of GPV finite-type whenever $F$ is nonconstant on classical knots.  The main technique of the proof is twist lattices.

\begin{lemma} [proof of Theorem \ref{thm2}]\label{thm2lemm1} If $F$ is a homogeneous GPV combinatorial formula of degree $n>0$ which is nonconstant on the set of classical knots, and $P$ is the Gaussian parity, then its even part $F^e$ and its odd part $F^o$ are not of GPV finite-type of order $m$ for any $m$.
\end{lemma} 

\begin{proof} Let $v_F(\cdot)=\left<F,I(\cdot)\right>$. There is a classical knot on which $v_F$ is not zero.  Its Gauss diagram $D$ has all of its arrows even in the Gaussian parity.  

We construct a fractional twist lattice $\Phi_D:\mathbb{Z}^n \to \mathscr{D}$ on $D$ as follows. For each arrow, base a fractional twist sequence of type FBZ on the intervals which lie immediately to the right and left of the arrow tail. Then $\Phi_D(\vec{0})=D$ and if $z=(z_1,\ldots,z_n)$ is an $n$-tuple of odd integers, then every arrow of $\Phi_D(z)$ is odd in the Gaussian parity. Since $v_F$ is of GPV finite-type, $v_F\circ \Phi_D$ is a polynomial of order $\le n$. Suppose that $F^e$ generates a GPV finite-type invariant $u$ of order $\le t$.  Then $u \circ \Phi_D$ is a polynomial of order $\le t$.  Let $\delta: \mathbb{Z} \to \mathbb{Z}^n$ denote the diagonal embedding.  Since $u \circ \Phi_D$ is a polynomial of order $\le t$, $u \circ \Phi_D \circ \delta$ is a polynomial in a single variable of order $\le t$.  Since $u \circ \Phi_D \circ \delta(0)=v_F(D) \ne 0$ and $u \circ \Phi_D \circ \delta(1)=0$, $u \circ \Phi_D \circ \delta$ is not constant.  However, $u \circ \Phi_D \circ \delta (2z+1)=0$ for all $z \in \mathbb{Z}$.  As there is no such polynomial in a single variable, it follows that $F^e$ does not generate a GPV finite-type invariant of order $\le t$ for any $t$.

Now, since $F$ gives a polynomial on every fractional twist lattice, the even/odd decomposition theorem implies that either both $\left<\left<F^e, I[P](\cdot)\right>\right>$ and $\left<\left<F^o,I[P](\cdot)\right>\right>$ are polynomials or both are not polynomials.  By the preceding paragraphs, we see that there is always some twist lattice in which at least one of them is not a polynomial.  Thus, neither the invariant obtained from $F^e$ nor the invariant obtained from $F^o$ is of GPV finite-type.
\end{proof}

\section{Appendix}\label{secappendix}
Below is a generating set for the combinatorial $P$-formulae on $\mathscr{O}_{2}$ which are linearly independent over $\mathbb{Z}$, as found by the \emph{Mathematica} program \cite{CWR}. This program can be downloaded from the site listed in the reference \cite{CWR}. The program has been used to compute many higher order invariants as well, but we do not print them here.  The interested reader can compute them as well by following the instructions included with the program. 
\begin{enumerate}
\item \framebox{$\begin{array}{c} \psfig{figure=indycom22form1_better2.eps} \end{array}$}
\item \framebox{$\begin{array}{c} \psfig{figure=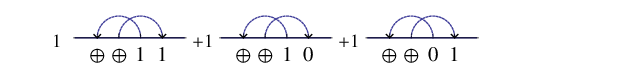} \end{array}$}
\item \framebox{$\begin{array}{c} \psfig{figure=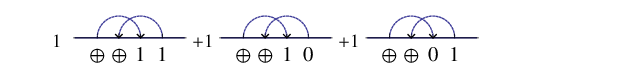} \end{array}$}
\item \framebox{$\begin{array}{c} \psfig{figure=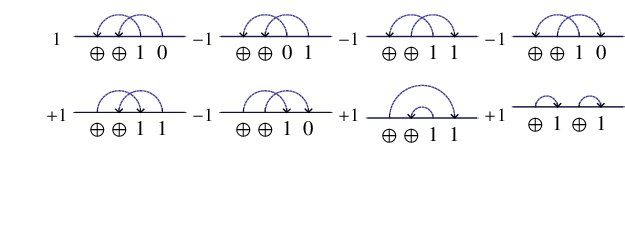} \end{array}$}
\item \framebox{$\begin{array}{c} \psfig{figure=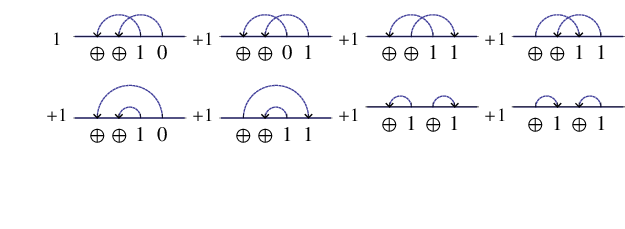} \end{array}$}
\item \framebox{$\begin{array}{c} \psfig{figure=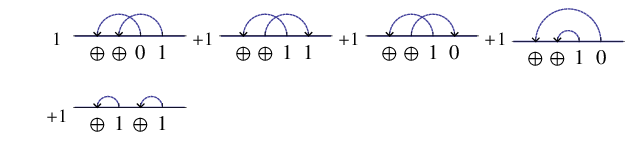} \end{array}$}
\item \framebox{$\begin{array}{c} \psfig{figure=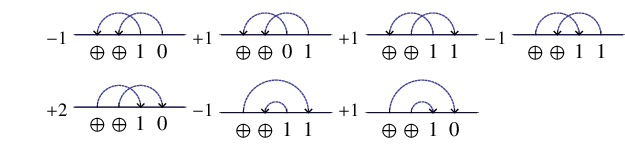} \end{array}$}
\item \framebox{$\begin{array}{c} \psfig{figure=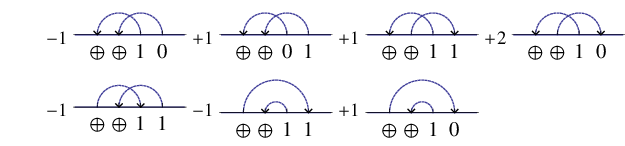} \end{array}$}
\item \framebox{$\begin{array}{c} \psfig{figure=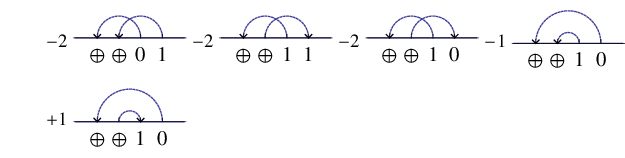} \end{array}$}
\item \framebox{$\begin{array}{c} \psfig{figure=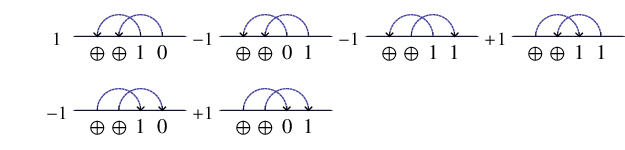} \end{array}$}
\item \framebox{$\begin{array}{c} \psfig{figure=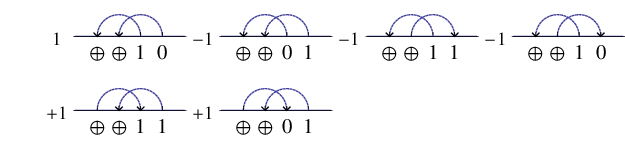} \end{array}$}
\end{enumerate}

Micah W. Chrisman:  Monmouth University, West Long Branch, NJ, USA\;

{\tt e-mail: mchrisma  at  monmouth dot edu}

Vassily Olegovich Manturov: People's Friendship University of
Russia, Faculty of Sciences, 3 Ordjonikidze St., Moscow, 117923

{\tt e-mail vomanturov at  yandex dot ru}

\bibliographystyle{plain}
\bibliography{bib_comm}
\end{document}